\documentclass[12pt]{amsart}

\usepackage{pinlabel}
\usepackage[all]{xy}
\usepackage{amscd}
\usepackage{amssymb}
\usepackage{xcolor}
\usepackage{mathrsfs}

\newcommand{\bc}{}

\usepackage{url}
\usepackage{hyperref}

\title{Cappell-Shaneson knot pairs with the same Alexander polynomial}
\author[H.~Endo]{Hisaaki Endo}
\address{Department of Mathematics\\Institute of Science Tokyo\\
2-12-1 Oh-okayama\\\linebreak Meguro-ku\\Tokyo 152-8551\\Japan}
\email{endo@math.titech.ac.jp}
\author[K.~Iwaki]{Kazunori Iwaki}
\address{Department of Mathematics\\Institute of Science Tokyo\\
2-12-1 Oh-okayama\\\linebreak Meguro-ku\\Tokyo 152-8551\\Japan}
\email{iwaki.k.math@gmail.com}
\author[A.~Pajitnov]{Andrei Pajitnov}
\address{Laboratoire Math\'ematiques Jean Leray UMR 6629,
Universit\'e de Nantes,
Facult\'e des Sciences,
2, rue de la Houssini\`ere,
44072, Nantes, Cedex}  
\email{andrei.pajitnov@univ-nantes.fr}

\newcommand{\cs}{Cappell-Shaneson:1976-1}
\newcommand{\csm}{Cappell-Shaneson matrix}
\newcommand{\cskp}{Cappell-Shaneson knot pair}
\newcommand{\csss}{Cappell-Shaneson~ }

\theoremstyle{plain}
\newtheorem{theorem}{Theorem}[section]

\newtheorem{corollary}[theorem]{Corollary}

\theoremstyle{definition}
\newtheorem{definition}[theorem]{Definition}

\theoremstyle{remark}
\newtheorem{remark}[theorem]{Remark}

\newcommand{\ve}{\varepsilon}

\renewcommand{\t}{\theta}

\newcommand{\rr}{{\mathbb{R}}}

\newcommand{\ttt}{{\mathbb{T}}}

\newcommand{\zz}{{\mathbb{Z}}}

\newcommand{\PPPP}{{\mathscr{P}}}

\newcommand{\bere}{\begin{rema}}
\newcommand{\bede}{\begin{defi}}

\renewcommand{\beth}{\begin{theo}}
\newcommand{\bele}{\begin{lemm}}
\newcommand{\bepr}{\begin{prop}}
\newcommand{\beeq}{\begin{equation}}
\newcommand{\bega}{\begin{gather}}
\newcommand{\begaa}{\begin{gather*}}
\newcommand{\been}{\begin{enumerate}}

\newcommand{\bedee}{\begin{defii}}
\newcommand{\bethh}{\begin{theoo}}
\newcommand{\belee}{\begin{lemmm}}
\newcommand{\beprr}{\begin{propp}}

\newcommand{\beco}{\begin{coro}}

\newcommand{\beal}{\begin{aligned}}

\newcommand{\enre}{\end{rema}}

\newcommand{\enco}{\end{coro}}
\newcommand{\enpr}{\end{prop}}
\newcommand{\enth}{\end{theo}}
\newcommand{\enle}{\end{lemm}}
\newcommand{\enen}{\end{enumerate}}
\newcommand{\enga}{\end{gather}}
\newcommand{\engaa}{\end{gather*}}
\newcommand{\eneq}{\end{equation}}
\newcommand{\enal}{\end{aligned}}

\newcommand{\bq}{\begin{equation}}
\newcommand{\bqq}{\begin{equation*}}

\renewcommand{\geq}{\geqslant}

\newcommand{\sm}{\setminus}

\newcommand{\sbs}{\subset}

\newcommand{\wrt}{with respect to}

\newcommand{\nei}{neighbourhood}

\newcommand{\Prf}{{\it Proof.\quad}}

\newcommand{\chart}{\Phi_p:U_p\to B^n(0,r_p)}
\newcommand{\atlas}{\{\Phi_p:U_p\to B^n(0,r_p)\}_{p\in S(f)}}

\newcommand{\qs}{\hfill\square}

\theoremstyle{plain}

\newtheorem{theo}{Theorem}[section]
\newtheorem{lemm}[theo]{Lemma}
\newtheorem{prop}[theo]{Proposition}
\newtheorem{coro}[theo]{Corollary}

\theoremstyle{definition}

\newtheorem{defi}[theo]{Definition}
\newtheorem{rema}[theo]{Remark}

\newcommand{\arrh}[3]
{
\xymatrix{
{#1} \ar[r]^<<<<{#2}  &{#3}
}
}

\newcommand{\arrr}[1]
{\arrh {}{#1}{}}

\newcommand{\arr}
{\arrr {}}

\newcommand{\arrto}
{\xymatrix{{} \ar@{|-{>}}[r]  & {} } }

\newcommand{\arrinto}
{\xymatrix{{} \ar@{^{(}->}[r]  & {} } }

\begin{document}

\begin{abstract}
{\bc
It is well known that for $m\geq 2$
there are at most two non-equivalent $m$-knots with diffeomorphic
exterior. Such pair of knots will be called {\it non-reflexive knot pair}.
A classical problem in topology is to
determine all dimensions where such
knot pairs
exist. In 1976 Cappell and Shaneson gave a method
of constructing non-reflexive knot pairs. In the present paper we construct
an infinite family of new examples of \csss knot pairs, and give examples of
\csss knot pairs that have the same Alexander polynomial but are inequivalent.}

\end{abstract}
\maketitle

\newcommand{\stss}{\sim_*}



\section{Introduction}
In this paper, a $C^{\infty}$ embedding of $S^{m}$ into $S^{m+2}$ is called an $m$-knot.
Two $m$-knots are said to be equivalent if and only if there exists a self-diffeomorphism of $S^{m+2}$ that maps one knot onto the other. It is a classical problem in knot theory whether in given
dimension there are inequivalent knots with diffeomorphic
{\bc complements}.
For the case of classical knots ($m=1$), {\bc a theorem of } Gordon and Luecke \cite[Theorem 1]{Gordon-Luecke:1989} asserts that the knots are equivalent if the {\bc complements} are homeomorphic.
 Gluck \cite[Theorem 1.]{Gluck:1961}, Browder \cite[Corollary 3.]{Browder:1967}, Lashof and Shaneson \cite[Theorem 1.5.]{Lashof-Shaneson:1969} proved that for $m \ge 2$ there are at most $2$ inequivalent $m$-knots with diffeomorphic complements.

 A knot that is determined by its complement is called { \it reflexive}. In 1976, Cappell and Shaneson \cite{Cappell-Shaneson:1976-1} constructed the first examples of non-reflexive knots for $m=3,4$.
Soon after that, Gordon \cite{Gordon:1976} has constructed non-reflexive 2-knots.
In 1992, {\bc  Suciu } \cite{Suciu:1992} constructed
non-reflexive $m$-knots for every $m \equiv 3 {\rm \ or\ } 4  ({\rm mod} \ 8)$.
 In 1999, Gu and Jiang \cite{Gu-Jiang} constructed non-reflexive knots for $m=5,6$.

 {\bc
 The paper  \cite{Cappell-Shaneson:1976-1} by Cappell and Shaneson contains  a general method of constructing non-reflexive knots. Namely,
 to each $(n\times n)$-matrix  satisfying certain conditions
 (see Definition \ref{definition:Cappell-Shaneson matrix})
  they associated two inequivalent $(n-1)$-knots with homeomorphic complements (here $n\geq 4$). Such matrices will be called
  {\it \csss matrices}. These conditions turn out to be
 conditions on the characteristic polynomial of the matrix.
 A polynomial satisfying these conditions will be called
 {\it  Cappell-Shaneson  polynomial} and the pair of the two knots above will be called  {\it Cappell-Shaneson knot pair}. The present paper is the sequel
 of our work \cite{Endo-Iwaki-Pajitnov:2025} where we enumerated all
 \csss  polynomials  of degrees 4,5, and found new families of \csss polynomials
 of degrees 6 and 7.
}
However, it remains unknown whether Cappell-Shaneson polynomials exist for $n \ge 8$.

{\bc
In the present paper we prove that there are inequivalent \csss knot pairs with the same Alexander polynomial. (All previously known \csss knot pairs are determined by their Alexander polynomial.) We show in particular that there exist more than 10000 \csss knot pairs with Alexander polynomial equal to $x^5-99x^4+197x^3-197x^2+100x-1$. Furthermore we give new families of  \csss knot pairs for $n=4,5,6,7$. We describe  a complete algebraic invariant of Cappell-Shaneson knot pairs and provide a method to enumerate all knot pairs with given Alexander polynomial.
}

The present paper is organized as follows. In Section~\ref{section:geometry}, we first review Cappell and Shaneson's method to construct non-reflexive knots.
{\bc
Then we reduce the problem of classification of
\cskp s to a purely algebraic question concerning
classification of matrices.
 }

{\bc
This algebraic question is discussed in Section
\ref{section:classification_cs_matrix}.
To a given \csss matrix $A$ we associate
the ideal class monoid of the ring $\zz[\theta]$
where $\theta$ is a root of the characteristic polynomial of $A$.
We show that this invariant is a complete invariant of classes of
$*$-equivalence of matrices and therefore a complete invariant
of classes of equivalence of \csss knot pairs.

In Section~\ref{section:example} we compute ideal class monoids for several infinite families of
\csss polynomials in degrees 4,5,6,7. We give new examples of Cappell-Shaneson
knot pairs, and examples of inequivalent \csss knot pairs with same Alexander polynomial.
}

\subsection*{Acknowledgements}
The first author thanks the Nantes University, the DefiMaths program, and 
``Mission Invite'' for the support and warm hospitality. 
The first author is/was supported by JSPS KAKENHI Grant Numbers 
24K06707, 20K03578, 19H01788.
The third author was supported by JSPS International Fellowships for Research in Japan 
(Short-term), FY2022 Research Abroad and Invitation Program for International Collaboration, 
and the WRH program. 
This study was carried out using the TSUBAME4.0 supercomputer at Institute of Science Tokyo.

\section{Classification of Cappell-Shaneson knot pairs}\label{section:geometry}

In this section we prove a classification theorem for
 Cappell-Shaneson knot pairs.
It turns out  that the fundamental group of the complement is the complete invariant of such pairs, and these fundamental groups, in turn, are classified by equivalence classes of matrices
(with respect to a $*$-equivalence relation, see Definition
\ref{d:star-eq}).
We begin by a brief recollection of Cappell-Shaneson construction from \cite{\cs}.

\begin{definition}\label{definition:Cappell-Shaneson matrix}
{\bc A matrix }
$A \in \operatorname{SL}(n;\mathbb{Z})$ is called a \textit{Cappell-Shaneson matrix} if it satisfies the following condition $\mathrm{CS}_k$ for every integer $k$ in $\{ 1, \ldots, [n/2]\}$.

$\mathrm{CS}_k$: $\det(I- \bigwedge^k A)=\pm1$, where $I$ is the identity matrix and $\bigwedge^k A$ is the $k$-th exterior power of $A$.

Let $A$ be a Cappell-Shaneson matrix. We say $A$ is \textit{positive} if $(-1)^n \det(xI - A) > 0$ for every $x\in (-\infty,0)$.
\end{definition}

\noindent
{\bc
The condition $\mathrm{CS}_k$ is actually a condition on the characteristic polynomial
of the matrix (see \cite{Endo-Iwaki-Pajitnov:2025}, Corollary 2.8).
}

\begin{definition}
Let $A$ be a Cappell-Shaneson matrix. A monic {\bc polynomial}
$f(x)$ of degree $n$ is called a \textit{Cappell-Shaneson polynomial}
if it is the characteristic polynomial of a Cappell-Shaneson matrix of order $n$. A Cappell-Shaneson polynomial $f(x)$ is called \textit{positive} if $f(x)$ is the characteristic polynomial of a positive Cappell-Shaneson matrix.
\end{definition}
Let $A$ be a \csm~  in $\operatorname{SL}(n,\zz)$, and
$M_A$ be the mapping torus of the map $f_A:\ttt^{n}\to\ttt^{n}$ induced by $A$,
\begin{equation*}
	M_A=\ttt^n\times \mathbb{R}/(x,t)\sim(f_A(x),t-1)
	\end{equation*}

	\noindent
	We have  fibration of $M_A$ onto the circle $C$:
	\begin{equation}\label{f:fibr}
	 p_A:M_A\to \rr/\zz=C, \ \ p_A(x,t)=t
	 	\end{equation}
We denote by  $C_A$ (or just by $C$ if no confusion is possible)
the zero section of this fibration.
The normal bundle to $C$ in $M_A$ is trivial; choose a framing $\phi$ of $C$; we have then  a diffeomorphism of a tubular \nei~ $N_C$ of $C$ onto $D^n\times C$. The corresponding surgery replaces  $N_C$ by $S^{n-1}\times D^2$. It follows from the conditions $\rm{CS}_k$
that
the result of the surgery is a homotopy $(n+1)$-sphere $\Sigma^{n+1}$.
The image of $S^{n-1}\times\{0\}$ in $\Sigma^{n+1}$ is
 a $(n-1)$-knot, denote it
by $K(\phi)$. Choose a homeomorphism $F:\Sigma^{n+1}\to S^{n+1}$ which is smooth except maybe one point. Applying $F$ to the  knot $K(\phi)$ we obtain finally an $(n-1)$-knot in $S^{n+1}$. Its complement is diffeomorphic to $M_A\sm C$.

Since there are exactly two classes of equivalence of framings of $C$ in $M_A$ (corresponding to 2 elements of $\pi_1(SO(n))$)
the construction above leads to two $(n-1)$-knots which will
be denoted by $K_0(A)$ and $K_1(A)$. These knots have the following properties (see \cite{\cs}, Sections 3,4):

\been\item
The complement of $K_i(A)$ in $S^{n+1}$ is diffeomorhic
to $M_A\sm C$ (here $i = 0,1$).
\item
$K_0(A)$ is not equivalent to $K_1(A)$.
\item
 The  Alexander polynomial of $K_0(A)$ and $K_1(A)$ equals
the characteristic polynomial of  $A$.
\enen

\begin{definition}\label{d:pairs}
The unordered pair $\{K_0(A),K_1(A)\}$
will be  called a \textit{Cappell-Shaneson knot pair}
and denoted also by $\PPPP(A)$.
Two \cskp s
$\PPPP(A)$ and $\PPPP(B)$
are called {\it equivalent} if
$K_0(A)$ is equivalent to $ K_0(B)$ or to $ K_1(B)$
(then necessarily $K_1(A)$
is equivalent to $ K_1(B)$ or,  respectively, to $ K_0(B)$.
)
\end{definition}

Observe that for $i=0,1$  we have
\begin{equation}
 \label{f:semidir}\pi_1(S^{n+1}\setminus K_i(A))
\approx
\mathbb{Z}^{n}\rtimes_A \mathbb{Z},
\end{equation}

\noindent
Indeed,
the fundamental group of $S^{n+1}\setminus K_i(A)$
is isomorphic to that of $M_A\sm C$ and this space is the mapping torus of the map $f_A$ restricted to $\ttt^{n}\sm \{\omega\}$
(here $\omega$ is the image of the point $0\in\rr^n$ \wrt~
the projection $\rr^n\to\ttt^n$)
We have also
\begin{equation}
 \label{f:hh}
H_1(S^{n+1}\setminus K_i(A))
\approx
\zz.
\end{equation}

\noindent
Now we can proceed to classification of
\cskp s.
\begin{definition}\label{d:star-eq}
    Let $A,B$ in $\operatorname{GL}(n;\mathbb{Z})$.
    We say that $A$ is {\it  $*$-equivalent to $B$},
    and we write $A \stss B$,
    if $A$ is conjugate to $B$ or to $B^{-1}$ in
    $\operatorname{GL}(n;\mathbb{Z})$.
    \end{definition}

Denote $M_A\sm C$ by $Q_A$ for brevity.

\begin{theorem}\label{theorem:semidirect_product_classification}
    Let $A, B\in\operatorname{GL}(n;\mathbb{Z})$
    be \csss matrices.The following conditions are equivalent:
    \been
        \item $\pi_1(Q_A)\approx \pi_1(Q_B)$
        \item $A\stss B$.
    \enen
\end{theorem}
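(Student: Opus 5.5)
The plan is to work with the semidirect products $G_A=\zz^n\rtimes_A\zz$ given by (\ref{f:semidir}), so that $\pi_1(Q_A)\approx G_A$ and $\pi_1(Q_B)\approx G_B$. The implication (2)$\Rightarrow$(1) is elementary. If $B=PAP^{-1}$ with $P\in\operatorname{GL}(n;\zz)$, then $(v,k)\mapsto(Pv,k)$ is an isomorphism $G_A\to G_B$. If $B=PA^{-1}P^{-1}$, I first note that $G_A\approx G_{A^{-1}}$ via $(v,k)\mapsto(v,-k)$, which amounts to replacing the generator $t$ by $t^{-1}$, and then compose with the conjugation isomorphism.

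For (1)$\Rightarrow$(2), let $\Phi:G_A\to G_B$ be an isomorphism and write $G_A=\zz^n\rtimes\langle t\rangle$ with $tvt^{-1}=Av$. The key step is to show that $\Phi$ carries the normal subgroup $\zz^n\subset G_A$ onto $\zz^n\subset G_B$. I plan to identify $\zz^n$ intrinsically as the commutator subgroup. The commutator subgroup of $G_A$ is $(A-I)\zz^n$. The condition $\mathrm{CS}_1$ gives $\det(I-A)=\pm1$, so $A-I$ is invertible over $\zz$ and $[G_A,G_A]=\zz^n$; this is also consistent with (\ref{f:hh}). Since commutator subgroups are characteristic, $\Phi(\zz^n)=\zz^n$. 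Let $P\in\operatorname{GL}(n;\zz)$ denote the restriction of $\Phi$ to $\zz^n$.

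Next, $\Phi$ induces an isomorphism of the quotients $G_A/\zz^n\approx\zz\to G_B/\zz^n\approx\zz$. It follows that $\Phi(t)=(w,\varepsilon)$, that is, $\Phi(t)=w\,s^{\varepsilon}$, where $s$ is the generator of $G_B$ and $\varepsilon=\pm1$. Applying $\Phi$ to the relation $tvt^{-1}=Av$ gives
\[
PAv=\Phi(tvt^{-1})=w s^{\varepsilon}(Pv)s^{-\varepsilon}w^{-1}=B^{\varepsilon}Pv,
\]
because $w$ lies in the abelian group $\zz^n$ and therefore acts trivially by conjugation there. Hence $PAP^{-1}=B^{\varepsilon}$, so $A$ is conjugate to $B$ or to $B^{-1}$, and therefore $A\stss B$.

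The main obstacle is the characteristic-subgroup step, and it is resolved by the observation $[G_A,G_A]=(A-I)\zz^n=\zz^n$ coming from $\mathrm{CS}_1$. Without that condition I would have to argue that $\zz^n$ is the unique maximal abelian normal subgroup, which would be more delicate. The remaining checks, such as the computation of the commutator subgroup of the semidirect product, are routine.
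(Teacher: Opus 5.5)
Your proof is correct and is essentially the paper's argument. The paper shows that $\phi$ preserves the fiber subgroup $\mathbb{Z}^n$ by noting that $H_1\approx\mathbb{Z}$ leaves only the two epimorphisms $\pm(r_A)_*$ onto $\mathbb{Z}$, which amounts to your observation $[G_A,G_A]=(A-I)\mathbb{Z}^n=\mathbb{Z}^n$; it then gets $\phi_0 A\phi_0^{-1}=B^{\pm1}$ by the same conjugation computation, the only difference being that you derive $H_1\approx\mathbb{Z}$ directly from $\mathrm{CS}_1$ instead of quoting it from the knot complement, which makes your version slightly more self-contained.
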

\Prf
Let $A\stss B$. If $A=DBD^{-1}$ with $D \in \operatorname{GL}(n;\mathbb{Z})$, then  the map
$$\mathbb{Z}^{n}\rtimes_A \mathbb{Z} \to  \mathbb{Z}^{n}\rtimes_B \mathbb{Z}; \ \  (x, y) \mapsto (D^{-1}x, y)$$
is an isomorphism.
If $A=DB^{-1}D^{-1}$ with $D \in \operatorname{GL}(n;\mathbb{Z})$, then  the map
$$\mathbb{Z}^{n}\rtimes_A \mathbb{Z} \to  \mathbb{Z}^{n}\rtimes_B \mathbb{Z}; \ \  (x, y) \mapsto (D^{-1}x, -y)$$
is an isomorphism.

Proceeding to the  implication $(2) \Rightarrow (1)$,
observe that the restriction of the fibration $p_A$ to
the subspace $Q_A$ is a fibration
$r_A:Q_A\to C$
 with fiber
$\ttt^{n}\sm \{*\}$. Consider the
 exact sequence of this fibration:
$$
0\arr
\zz^{n}
\arr
\pi_1(Q_A)
\arrr {(r_A)_*}
\zz
\arr
0.
$$
Let $\phi: \pi_1(Q_A) \to \pi_1(Q_B)$ be an isomorphism.
Since $H_1(Q_A)\approx H_1(Q_B)\approx \zz$,
there are only two epimorphisms $\pi_1(Q_B)\to\zz$ and
 only two epimorphisms $\pi_1(Q_A)\to\zz$.
Therefore we have a commutative diagram

$$
\xymatrix{0 \ar[r] & \zz^{n}\ar[d]^{{\phi}_0}\ar[r] &
\pi_1(Q_A) \ar[d]^{\phi}  \ar[r]^{~  (r_A)_*} & \zz
\ar[d]^{\varepsilon}  \ar[r]  & 0\\
0 \ar[r] \ar[r] & \zz^{n}\ar[r] & \pi_1(Q_B)  \ar[r]^{(r_B)_*} & \zz \ar[r] & 0
}
$$
where $\ve$
equals $1$ or $-1$ and $\phi_0=\phi~|~\zz^{n}$.
Assume that $\ve=1$.
Choose an element
$\t_A
\in
\pi_1(Q_A)$
such that
$(r_A)_*(\t_A)=1$.Then $(r_B)_*(\phi(\t_A))=1$
and the action of $\phi(\t_A)$ on $\zz^{n+1}$
equals the action of the matrix $B$.
We have therefore
$$
\phi_0(\t_A h \t_A^{-1} ) =
\phi(\t_A)\phi_0(h)\phi(\t_A)^{-1}$$
for any $h\in\zz^{n}$
so that $\phi_0(Ah)= B\phi_0(h)$ and $A$ is conjugate to $B$ via $\phi_0$.

In the case $\ve=-1$,
the argument  similar to the above shows that
the matrix $A$ is conjugate to $B^{-1}$. $\qs$

\begin{theorem}
    Cappell-Shaneson knot pairs $\PPPP(A)$
    and $\PPPP(B)$
    are equivalent if and only if the matrices $A$ and $B$ are $*$-equivalent.
\end{theorem}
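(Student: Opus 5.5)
The plan is to prove the two implications separately, using Theorem \ref{theorem:semidirect_product_classification} to pass between knot groups and matrices.

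\emph{Equivalent pairs give $*$-equivalent matrices.} Suppose $\PPPP(A)$ and $\PPPP(B)$ are equivalent, say $K_0(A)$ is equivalent to $K_j(B)$. A self-diffeomorphism of $S^{n+1}$ carrying $K_0(A)$ onto $K_j(B)$ restricts to a diffeomorphism of the complements. By property (1) of the construction these complements are diffeomorphic to $Q_A$ and $Q_B$. Hence $\pi_1(Q_A)\approx\pi_1(Q_B)$, and Theorem \ref{theorem:semidirect_product_classification} gives $A\stss B$.

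\emph{$*$-equivalent matrices give equivalent pairs.} Suppose $A=DBD^{-1}$ or $A=DB^{-1}D^{-1}$ with $D\in\operatorname{GL}(n;\zz)$. The key step is to build a diffeomorphism $h:M_A\to M_B$ that carries the zero section $C_A$ onto $C_B$.

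\begin{itemize}
\item If $A=DBD^{-1}$, the linear diffeomorphism $f_{D^{-1}}$ of $\ttt^n$ conjugates $f_A$ to $f_B$. So $(x,t)\mapsto(D^{-1}x,t)$ is well defined on mapping tori and gives $h$.
\item If $A=DB^{-1}D^{-1}$, note that $M_{B^{-1}}$ is diffeomorphic to $M_B$ via $(x,t)\mapsto(x,-t)$. Composing with the previous case gives $h$.
\end{itemize}

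In both cases $h$ fixes $\omega$ in each fiber, so $h(C_A)=C_B$. Moreover $h$ maps a tubular neighbourhood of $C_A$ onto one of $C_B$, so it sends framings of $C_A$ to framings of $C_B$.

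The surgery on $(M_A,C_A)$ with framing $\phi$ and the surgery on $(M_B,C_B)$ with framing $h_*\phi$ therefore yield diffeomorphic pairs $(\Sigma_A,K(\phi))\cong(\Sigma_B,K(h_*\phi))$. Since $h_*$ is a bijection between the two framing classes, $\{K_0(A),K_1(A)\}$ is taken onto $\{K_0(B),K_1(B)\}$ in some order. It remains to pass from diffeomorphic pairs of homotopy spheres to equivalent knots in $S^{n+1}$. This is the delicate point, since the homeomorphisms $F$ to $S^{n+1}$ are chosen and are only smooth away from one point.

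I expect this last step to be the main obstacle, and would handle it in one of two ways.
\begin{itemize}
\item Choose $F_B\circ\bar h\circ F_A^{-1}$ to be smooth away from a point off the knot, and observe that equivalence is interpreted in the sense used by Cappell--Shaneson, which is compatible with their identification of $\Sigma$ with $S^{n+1}$.
\item Alternatively, avoid the issue by noting that the knots in $S^{n+1}$ are defined via the specific $\Sigma$, so the knot type depends only on the diffeomorphism type of $(\Sigma,K)$. This is the same convention under which properties (1)--(3) are stated.
\end{itemize}
With this convention the equivalence of the pairs follows, and the theorem is proved.
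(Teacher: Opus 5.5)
Your forward direction is the paper's argument. For the converse you take a genuinely different route. The paper never looks inside the surgered sphere. It uses $h$ (and $(x,t)\mapsto(x,1-t)$ for the inverse case) only to see that all four knots have complements diffeomorphic to $Q_A$. It then combines property (2), that $K_0(B)\not\sim K_1(B)$, with the Gluck--Browder--Lashof--Shaneson bound (at most two knot types with a given complement) to force $K_0(A)$ to be equivalent to $K_0(B)$ or $K_1(B)$. You instead transport the whole framed surgery through $h$. This gives diffeomorphisms of pairs $(\Sigma_A,K(\phi))\cong(\Sigma_B,K(h_*\phi))$ and an explicit matching of framing classes. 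Your route needs neither the uniqueness theorem nor property (2), and it tells you which knot goes to which. The cost is the passage from $\Sigma$ to $S^{n+1}$. The paper sidesteps that passage because it only uses property (1) for whatever representatives the chosen $F$ produced.

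That last step is where your write-up is soft. Your first option is incomplete as stated: equivalence here means a self-\emph{diffeomorphism} of $S^{n+1}$, and a homeomorphism smooth off one point is not yet one. Your second option is the right idea, but it is a fact to be proved, not a convention. A clean way to finish is as follows.

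\begin{itemize}
\item $F_B$ is only required to be a homeomorphism smooth off one point, so you may take $F_B:=F_A\circ\bar h^{-1}$. Then $F_B(K(h_*\phi))=F_A(K(\phi))$ on the nose.
\item What remains is that the knot type of $F(K)$ does not depend on the choice of $F$ (with singular point off $K$). The notation $K_i(B)$ already presupposes this, and it follows from the Palais--Cerf disk theorem.
\item Given two choices $F,F'$, the map $F'\circ F^{-1}$ is a diffeomorphism off a smooth ball $B$ that avoids $F(K)$; take $B$ to be a thickened arc joining the two singular points.
\item Its restriction to the disk $S^{n+1}\sm\operatorname{int}B$ is a smooth embedding of a disk. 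By the disk theorem it extends, after composing with a reflection if necessary, to a diffeomorphism of $S^{n+1}$ carrying $F(K)$ onto $F'(K)$.
\end{itemize}

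With this inserted, your argument is complete.
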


\Prf
If $\{K_0(A), K_1(A)\} $ is equivalent to $\{K_0(B), K_1(B)\} $,
then  all these four knots have diffeomorphic complements,
therefore the fundamental groups of the complements are isomorphic,
and Theorem~\ref{theorem:semidirect_product_classification} implies that $A$ is $*$-equivalent to $B$.

To prove the inverse implication, observe that if
$A=DBD^{-1}$ with $D\in \operatorname{GL}(n;\mathbb{Z})$
 then a map
    $h: M_A \to M_B$ defined by the  formula
    $$(x,t)\mapsto (D^{-1}x,t)$$
    is a well-defined diffeomorphism sending the circle
    $C_A\sbs M_A$
    to the circle $C_B\sbs M_B$
    Therefore the exteriors of the  four knots $K_0(A), K_1(A), K_0(B), K_1(B)$ are diffeomorphic. The knots
    $K_0(A)$ and $ K_1(A)$ are not equivalent. For $n > 2$ there are at most $2$ inequivalent $(n-1)$-knots with diffeomorphic complements (see \cite{Gluck:1961}, \cite{Browder:1967} and \cite{Lashof-Shaneson:1969}),
    so $K_0(A)$ is equivalent to $K_0(B)$ or to $ K_1(B)$,
    and $\PPPP(A)$ is equivalent to $\PPPP(B)$.

    Furthermore, observe that $\PPPP(A)$ is equivalent to
    $\PPPP(A^{-1})$. Indeed, the formula
    $k(x,t)=(x,1-t)$
    determines a diffeomorphism from $M_A$ to $M_{A^{-1}}$
    sending the circle $C_A$ to the circle $C_{A^{-1}}$,
    and applying the same argument as above we deduce that
    $\PPPP(A)\sim\PPPP(A^{-1})$. Therefore if $A$ is
    conjugate to $B^{-1}$, we have
    $$\PPPP(A)\sim\PPPP(B^{-1})\sim \PPPP(B),$$
    and the proof of  Theorem is complete. $\qs$

\section{Classification of Cappell-Shaneson matrices}\label{section:classification_cs_matrix}
    In this section, we introduce a method to obtain $*$-equivalence classes of Cappell-Shaneson matrices when a Cappell-Shaneson polynomial is given.

    In the beginning, we collect some results for Cappell-Shaneson polynomials.
    
    \begin{theorem}[{\cite[Theorem 3.2.]{Gu-Jiang}}]\label{theorem:irr}
     Cappell-Shaneson polynomials are irreducible over $\mathbb{Z}$.
    \end{theorem}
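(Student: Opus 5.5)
The plan is to argue by contradiction. Let $f(x)=\det(xI-A)$ be the characteristic polynomial of a Cappell-Shaneson matrix $A\in\operatorname{SL}(n;\zz)$, and suppose $f=gh$ with $g,h\in\zz[x]$ monic of degrees $p,q\geq 1$, $p+q=n$. Since $f$ is monic, Gauss's lemma lets us work with monic integer factors. The idea is to translate the conditions $\mathrm{CS}_k$ into conditions on the roots $\lambda_1,\dots,\lambda_n$ of $f$, and then show that one factor violates them.

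Over $\mathbb{C}$ the eigenvalues of $\bigwedge^k A$ are the products $\lambda_{i_1}\cdots\lambda_{i_k}$ with $i_1<\dots<i_k$. Hence $\mathrm{CS}_k$ reads
\[
\prod_{|S|=k}\Bigl(1-\prod_{i\in S}\lambda_i\Bigr)=\pm1 .
\]
Write $\alpha_1,\dots,\alpha_p$ for the roots of $g$ and $\beta_1,\dots,\beta_q$ for the roots of $h$. Every product over a $k$-subset of roots of $g$ alone, or of $h$ alone, appears among the factors above. So for each $j\le\min(k,p)$ the symmetric expression $P_j(g)=\prod_{|T|=j}(1-\prod_{i\in T}\alpha_i)$ is an integer dividing $\pm1$, because the remaining factors form a symmetric function of all the roots and are therefore integers. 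I will make this divisibility precise by grouping the $k$-subsets $S$ according to $(S\cap\{\alpha\},S\cap\{\beta\})$. Each group's product is symmetric in the $\alpha$'s and in the $\beta$'s separately, hence an algebraic integer fixed by the Galois group, hence a rational integer. The groups then multiply to $\pm1$, so each group is $\pm1$.

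With that in hand, take the group with $S$ consisting of $j$ of the $\alpha$'s and none of the $\beta$'s, where $k=j\le[n/2]$. Also consider the group with $S$ containing all $p$ of the $\alpha$'s plus $k-p$ of the $\beta$'s. The constant terms satisfy $g(0)h(0)=\pm1$, so $\prod\alpha_i=\pm1$. Assume without loss of generality $p\le q$, so $p\le[n/2]$. Taking $k=p$ and $S$ equal to all the $\alpha$'s gives the single factor $1-\prod\alpha_i=1-(-1)^p g(0)\in\{0,2\}$, and this must equal $\pm1$, which is impossible. This contradiction is the core of the argument. The value $0$ is excluded automatically, since otherwise the determinant would be $0$.

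The main obstacle is justifying that each subgroup product is an integer, rather than merely that the full product is. A product of two integers equals $\pm1$ only if both factors are $\pm1$, so it suffices to split the $k$-subsets into the singleton group $\{S=\text{all roots of }g\}$ and its complement. The complement is invariant under the Galois group of $f$ because the Galois group preserves the root sets of $g$ and of $h$ separately. Both products are therefore rational algebraic integers, and the argument closes.
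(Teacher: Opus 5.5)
Your argument is correct. The paper itself gives no proof of this statement; it only quotes Gu--Jiang, so your proposal serves as a self-contained justification rather than an alternative to an internal proof.

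The decisive point is the one you isolate. With $p=\deg g\le q$ we have $p\le[n/2]$, so $\mathrm{CS}_p$ applies. Moreover $\alpha_1\cdots\alpha_p=(-1)^p g(0)=\pm1$ is an eigenvalue of $\bigwedge^p A$, and the complementary product $R=\prod_{S\ne S_0}(1-\lambda_S)$ is a rational integer. Hence $\det(I-\bigwedge^p A)$ is $0$ or $2R$, which contradicts $\mathrm{CS}_p$.

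Two small remarks on the write-up.

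First, the sentence in your second paragraph saying the remaining factors form ``a symmetric function of all the roots'' is not right as stated. They are symmetric only in the $\alpha$'s and in the $\beta$'s separately, and that paragraph's claim about $P_j(g)$ for $j<k$ is also garbled. Your last paragraph does give the correct justification, so this is a matter of cleaning up. You should also index the subsets $S$ by positions $1,\dots,n$ rather than by roots, so that repeated roots cause no trouble. A Galois automorphism maps the root multiset of $g$ to itself, hence induces a permutation of the $\alpha$-indices, and likewise for $h$.

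Second, the Galois step can be bypassed altogether. The integer characteristic polynomial $\chi(x)=\det(xI-\bigwedge^p A)$ has the root $\pm1$, so $\chi(x)=(x\mp1)\chi_1(x)$ with $\chi_1\in\mathbb{Z}[x]$. Then $\det(I-\bigwedge^p A)=\chi(1)$ is either $0$ or $2\chi_1(1)$, hence even in both cases, which contradicts $\mathrm{CS}_p$.

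This phrasing also makes visible that the argument uses only $\det A=\pm1$ and the single condition $\mathrm{CS}_p$ for $p$ the smaller factor degree.
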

    
    \begin{theorem}\label{theorem:irr2}
    Let $f(x) \in \mathbb{Z}[x]$ be an irreducible monic polynomial with $deg(f) =n$. For $A \in \operatorname{M}(n;\mathbb{Z})$, $f$ is the characteristic polynomial of $A$ if and only if $f(A)=O$.
    \end{theorem}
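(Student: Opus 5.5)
The statement has two directions, and I would treat them separately.

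For the forward implication, assume $f$ is the characteristic polynomial of $A$. The Cayley--Hamilton theorem, which holds over any commutative ring and in particular over $\mathbb{Z}$, gives $f(A)=O$ directly. Irreducibility of $f$ is not used here.

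For the converse, assume $f(A)=O$ and work over $\mathbb{Q}$ (or $\mathbb{C}$). Let $\mu_A$ be the minimal polynomial of $A$ over $\mathbb{Q}$. Since $f(A)=O$, $\mu_A$ divides $f$ in $\mathbb{Q}[x]$.
\begin{enumerate}
\item A monic polynomial in $\mathbb{Z}[x]$ that is irreducible over $\mathbb{Z}$ is primitive. By Gauss's lemma it is therefore irreducible over $\mathbb{Q}$.
\item Since $\deg \mu_A\geq 1$ and $f$ is irreducible over $\mathbb{Q}$, the divisor $\mu_A$ must be $f$ itself. So $\mu_A=f$.
\item The characteristic polynomial $\chi_A(x)=\det(xI-A)$ is monic of degree $n$, and it is divisible by $\mu_A=f$.
\item Since $\deg f=n$ as well and both polynomials are monic, $\chi_A=f$.
\end{enumerate}

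An alternative route gives the same conclusion. Every eigenvalue $\lambda$ of $A$ satisfies $f(\lambda)=0$. Also, $\chi_A$ and $f$ share the root $\lambda$, and $f$ is the minimal polynomial of $\lambda$ over $\mathbb{Q}$. Hence $f$ divides $\chi_A$, and comparing degrees and leading coefficients finishes the argument.

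The only step needing any care is step 1, the passage from irreducibility over $\mathbb{Z}$ to irreducibility over $\mathbb{Q}$ via Gauss's lemma; monicity of $f$ is what makes it go through. The rest is degree counting.
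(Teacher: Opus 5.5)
Your proposal is correct and follows the paper's proof. The forward direction is Cayley--Hamilton, and for the converse the minimal polynomial divides $f$, hence equals $f$ by irreducibility, hence equals $\chi_A$ because both are monic of degree $n$. Your Gauss's lemma step simply makes explicit the passage from irreducibility over $\mathbb{Z}$ to irreducibility over $\mathbb{Q}$, which the paper uses tacitly.
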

    
    \begin{proof}
        If $f$ is the characteristic polynomial of $A$, $f(A)=O$ is true by the Cayley-Hamilton theorem. We assume $f(A)=O$. Then, the minimal polynomial of $A$ divides $f$. The minimal polynomial of $A$ is equal to $f$ since $f$ is irreducible. The minimal polynomial of $A$ is equal to the characteristic polynomial of $A$ since both polynomials are monic polynomials with the same degree. Therefore, $f$ is the characteristic polynomial of $A$.
    \end{proof}

    By Theorem~\ref{theorem:irr2}, if a polynomial $f$ is a Cappell-Shaneson polynomial and $f(A)=O$ for $A \in \operatorname{M}(n;\mathbb{Z})$, then $A$ is a Cappell-Shaneson matrix. Therefore, $\{A \in \operatorname{M}(n;\mathbb{Z})\mid f(A)=O\}$ is the set of Cappell-Shaneson matrices whose characteristic polynomial is $f$.

    Next, we introduce an algebraic object called an ideal class monoid to classify Cappell-Shaneson matrices.

    \begin{definition}
    Let $R$ be an integral domain and $\mathcal{I}(R)$ the set of non-zero ideals. We define an equivalence relation $\approx$ on $\mathcal{I}(R)$ such that $I \approx J$ if and only if there exist non-zero $\alpha,\beta \in R$ such that $\alpha I = \beta J$. The general multiplication of ideals induces an operation in $\mathcal{I}(R)/\approx$, and this set becomes a commutative monoid. We call it \textit{ideal class monoid} and write $C(R)$. We say that an ideal $I$ is \textit{invertible} if $[I]$ is an invertible element in $C(R)$.
    \end{definition}

    \begin{theorem}[Latimer-MacDuffee \cite{Latimer-MacDuffee}, Taussky \cite{Taussky}]\label{theorem:LMT}
     Let $f(x) \in \mathbb{Z}[x]$ be an irreducible monic polynomial. Let $\theta$ be a root of $f(x)$. The following two maps are bijective and $\phi_f^{-1} = \psi_f$.
     \begin{itemize}
         \item Let $v=[v_1, \ldots, v_n]^\mathrm{T}\in\mathbb{Z}[\theta]^n$ be an eigenvector of $A$ and $S$ a $\mathbb{Z}$-submodule of $\mathbb{Z}[\theta]$ generated by $v_1,\ldots,v_n$. Define $\phi_f : \{A \in \operatorname{M}(n;\mathbb{Z})\mid f(A)=O\}/\sim_C \rightarrow C(\mathbb{Z}[\theta])$ as $\phi_f([A])=[S]$.
         \item Let $\{v_1, \ldots, v_n\}$ be a basis of an ideal $S$ and $v=[v_1,\ldots,v_n]^\mathrm{T}$. There is $A\in \operatorname{M}(n;\mathbb{Z})$ such that $\theta v=A v$. Define $\psi_f : C(\mathbb{Z}[\theta]) \rightarrow \{A \in \operatorname{M}(n;\mathbb{Z})\mid f(A)=O\}/\sim_C$ as $\psi_f([S])=[A]$.
     \end{itemize}
    \end{theorem}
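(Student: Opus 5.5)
We prove the Latimer--MacDuffee--Taussky correspondence directly. Throughout, $f$ is irreducible and monic of degree $n$, $\theta$ is a root of $f$, $K=\mathbb{Q}(\theta)$, and $\sim_C$ denotes conjugacy in $\operatorname{GL}(n;\mathbb{Z})$. The plan has four steps: show that $\phi_f$ and $\psi_f$ are well defined on classes, then show that they are mutually inverse.

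\textbf{Step 1: $\phi_f$ is well defined.} Let $f(A)=O$.

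First we produce the eigenvector. By Theorem~\ref{theorem:irr2}, $f$ is the characteristic polynomial of $A$, so $\theta$ is an eigenvalue of $A$. The kernel of $A-\theta I$ over $K$ is nonzero, and we can pick an eigenvector $v\in K^n$. Clearing denominators by an integer lets us assume $v\in\mathbb{Z}[\theta]^n$.

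Next we check that the $\mathbb{Z}$-span $S$ of $v_1,\dots,v_n$ is a nonzero ideal of $\mathbb{Z}[\theta]$. From $\theta v=Av$, each $\theta v_i$ is an integer combination of the $v_j$, so $\theta S\subset S$. Hence $S$ is a $\mathbb{Z}[\theta]$-submodule of $\mathbb{Z}[\theta]$.

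We also need the $v_i$ to be $\mathbb{Q}$-linearly independent. Suppose $c^{T}v=0$ with $c\in\mathbb{Q}^n$. Then $c^{T}A^kv=\theta^k c^{T}v=0$ for all $k$. Apply any embedding $\sigma$ of $K$; since $A$ is rational, the conjugate vectors $\sigma(v)$ are eigenvectors of $A$ for the $n$ distinct eigenvalues $\sigma(\theta)$. They therefore span $\mathbb{C}^n$, and $c$ annihilates all of them, so $c=0$.

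Finally, the class $[S]$ is independent of the choices.
\begin{itemize}
\item The $\theta$-eigenspace of $A$ over $K$ is one-dimensional, because $\theta$ is a simple root of $f$. So any other choice is $v'=\lambda v$ with $\lambda\in K^{\times}$. Writing $\lambda=\alpha/\beta$ with $\alpha,\beta\in\mathbb{Z}[\theta]$ nonzero gives $\beta S'=\alpha S$.
\item If $A'=PAP^{-1}$ with $P\in\operatorname{GL}(n;\mathbb{Z})$, then $Pv$ is an eigenvector of $A'$. Its entries generate the same $\mathbb{Z}$-module as those of $v$, since $P$ is unimodular.
\end{itemize}

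\textbf{Step 2: $\psi_f$ is well defined.} Let $S$ be a nonzero ideal.

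The ideal $S$ is a full-rank lattice in $\mathbb{Z}[\theta]\cong\mathbb{Z}^n$, because it contains $sZ[\theta]$ for any nonzero $s\in S$. So it has a $\mathbb{Z}$-basis $v_1,\dots,v_n$. Since $\theta S\subset S$, there is an integer matrix $A$ with $\theta v=Av$. Moreover $f(A)v=f(\theta)v=0$. Applying the same conjugate-vector argument as in Step~1, the vectors $\sigma(v)$ are annihilated by $f(A)$ and span $\mathbb{C}^n$, so $f(A)=O$.

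The class $[A]$ is independent of the choices:
\begin{itemize}
\item A change of basis $v\mapsto Pv$ with $P\in\operatorname{GL}(n;\mathbb{Z})$ replaces $A$ by $PAP^{-1}$.
\item If $\alpha S=\beta S'$, then $S'=\lambda S$ with $\lambda=\alpha/\beta$, and $\lambda v$ is a basis of $S'$ giving the same matrix $A$.
\end{itemize}

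\textbf{Step 3: the maps are mutually inverse.}

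For $\phi_f\circ\psi_f=\mathrm{id}$: if $v$ is a basis of $S$ with $\theta v=Av$, then $v$ is itself an eigenvector of $A$ with entries in $\mathbb{Z}[\theta]$. It generates $S$, so $\phi_f\psi_f[S]=[S]$.

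For $\psi_f\circ\phi_f=\mathrm{id}$: given $A$ and an eigenvector $v$, Step~1 showed that $v_1,\dots,v_n$ are independent, so they form a $\mathbb{Z}$-basis of $S$. The matrix that $\psi_f$ builds from this basis is exactly $A$.

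Bijectivity follows from these two identities.

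\textbf{Main obstacle.} The delicate point is the simultaneous use of two facts: that the eigenvector entries are $\mathbb{Q}$-linearly independent, and that the $\theta$-eigenspace over $K$ is one-dimensional. Both rest on $f$ having $n$ distinct roots, which holds because an irreducible polynomial over $\mathbb{Q}$ is separable. The conjugate-eigenvector argument handles both at once.
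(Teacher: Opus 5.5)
Your proof is correct. The paper gives no proof of this statement; it quotes it from Latimer--MacDuffee and Taussky. Your argument is essentially the classical one from those sources:
\begin{itemize}
\item the entries of a $\theta$-eigenvector form a $\mathbb{Z}$-basis of an ideal of $\mathbb{Z}[\theta]$;
\item a change of $\mathbb{Z}$-basis corresponds to conjugation in $\operatorname{GL}(n;\mathbb{Z})$;
\item rescaling the eigenvector by $\lambda\in K^{\times}$ corresponds to the relation $\alpha I=\beta J$.
\end{itemize}

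You rightly read ``eigenvector of $A$'' as an eigenvector for the eigenvalue $\theta$; the paper only makes this explicit in its later theorem. One simplification is available in Step~2. There the conjugate-vector argument for $f(A)=O$ can be replaced by a direct observation: each row $r$ of the integer matrix $f(A)$ satisfies $r\cdot v=0$, so $r=0$ because the $v_i$ form a $\mathbb{Z}$-basis of $S$.
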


    \begin{corollary}
        Let $f(x) \in \mathbb{Z}[x]$ be a Cappell-Shaneson polynomial. Let $\theta$ be a root of $f(x)$. Then, there is a one-to-one correspondence between $C(\mathbb{Z}[\theta])$ and $\{A\mid f(A)=O\}/\sim_C$.
    \end{corollary}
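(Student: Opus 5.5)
This is a direct consequence of Theorem~\ref{theorem:irr} and Theorem~\ref{theorem:LMT}. The plan is to check that the hypotheses of the Latimer--MacDuffee--Taussky theorem hold for $f$, and then read off the bijection.

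First, since $f$ is a Cappell-Shaneson polynomial, Theorem~\ref{theorem:irr} shows that $f$ is irreducible over $\mathbb{Z}$. Because $f$ is monic by definition, it is an irreducible monic polynomial in $\mathbb{Z}[x]$. Hence $\mathbb{Z}[\theta]$ is an integral domain, being a subring of the number field $\mathbb{Q}(\theta)$. It follows that $C(\mathbb{Z}[\theta])$ is defined.

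Next, we apply Theorem~\ref{theorem:LMT} to $f$. It yields mutually inverse bijections
\[
\phi_f\colon \{A\in\operatorname{M}(n;\mathbb{Z})\mid f(A)=O\}/\sim_C \;\to\; C(\mathbb{Z}[\theta]), \qquad \psi_f = \phi_f^{-1}.
\]
Here $\sim_C$ denotes conjugacy in $\operatorname{GL}(n;\mathbb{Z})$, and $n=\deg f$. This bijection is exactly the claimed one-to-one correspondence.

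It is also worth recording, via Theorem~\ref{theorem:irr2} and the remark following it, that the set $\{A \mid f(A)=O\}$ is precisely the set of Cappell-Shaneson matrices whose characteristic polynomial is $f$. Indeed, $f(A)=O$ forces the characteristic polynomial of $A$ to be $f$, so $\det A = \pm f(0)$ and each $\mathrm{CS}_k$ condition holds, since these conditions depend only on the characteristic polynomial. The corollary can therefore be read as classifying Cappell-Shaneson matrices with characteristic polynomial $f$ up to conjugacy.

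There is no substantive obstacle. The only point requiring care is irreducibility, which is what makes $\mathbb{Z}[\theta]$ a domain of rank $n$ and lets Theorem~\ref{theorem:LMT} apply; it is supplied by Theorem~\ref{theorem:irr}.
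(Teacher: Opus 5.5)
Your proposal is correct and matches the paper's implicit argument: Theorem~\ref{theorem:irr} makes $f$ an irreducible monic polynomial, and the bijection $\phi_f$ of Theorem~\ref{theorem:LMT} is then exactly the claimed correspondence. Your side remark identifying $\{A\mid f(A)=O\}$ with the Cappell-Shaneson matrices of characteristic polynomial $f$ mirrors the paper's use of Theorem~\ref{theorem:irr2}; the precise identity is $\det A=(-1)^n f(0)$, and this equals $1$ because $f$ is the characteristic polynomial of a matrix in $\operatorname{SL}(n;\mathbb{Z})$.
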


    The above discussion gives us how to classify Cappell-Shaneson matrices based on conjugacy. Furthermore, we use this formulation to classify $*$-equivalence classes.

\begin{theorem}\label{theorem:csmulinverse}
Let $A \in \operatorname{SL}(n;\mathbb{Z})$ be a Cappell-Shaneson matrix whose characteristic polynomial is $f$. Then $A^{-1}$ is a Cappell-Shaneson matrix whose characteristic polynomial is $(-x)^nf(x^{-1})$. We write this polynomial as $f^*$ and call it the \textit{signed reciprocal polynomial} of $f$.
\end{theorem}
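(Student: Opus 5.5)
We must show three things about $A^{-1}$: it lies in $\operatorname{SL}(n;\mathbb{Z})$, it satisfies $\mathrm{CS}_k$ for every $k\in\{1,\ldots,[n/2]\}$, and its characteristic polynomial is $f^*$.

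\textbf{Membership in $\operatorname{SL}(n;\mathbb{Z})$.} Since $\det A=1$, the adjugate formula $A^{-1}=\operatorname{adj}(A)$ shows that $A^{-1}$ has integer entries. Moreover $\det A^{-1}=1$.

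\textbf{The characteristic polynomial.} We compute it directly:
$$\det(xI-A^{-1})=\det(A^{-1})\det(xA-I)=\det\bigl(-x(x^{-1}I-A)\bigr)=(-x)^n f(x^{-1}),$$
using $\det A^{-1}=1$. This holds for all $x\neq 0$, hence as an identity of polynomials. The result is monic because the leading coefficient is $(-1)^n f(0)=(-1)^n(-1)^n\det A=1$.

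\textbf{The conditions $\mathrm{CS}_k$.} By functoriality of exterior powers, $\bigwedge^k(A^{-1})=(\bigwedge^k A)^{-1}$, and $\det\bigwedge^k A=(\det A)^{\binom{n-1}{k-1}}=1$. Then
$$\det\Bigl(I-\textstyle\bigwedge^k A^{-1}\Bigr)=\det\Bigl(\bigl(\textstyle\bigwedge^k A\bigr)^{-1}\Bigr)\det\Bigl(\textstyle\bigwedge^k A-I\Bigr)=(-1)^{\binom nk}\det\Bigl(I-\textstyle\bigwedge^k A\Bigr)=\pm1.$$
So $\mathrm{CS}_k$ for $A$ implies $\mathrm{CS}_k$ for $A^{-1}$, for every $k$.

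The only points needing care are the multiplicativity $\bigwedge^k(BC)=\bigwedge^k B\,\bigwedge^k C$ and the sign bookkeeping; neither is a real obstacle. As a consequence, $f^*$ is again a Cappell-Shaneson polynomial.
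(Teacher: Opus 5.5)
Your proof is correct and follows essentially the same route as the paper. For $\mathrm{CS}_k$ you use the same identity $\det(I-M^{-1})=\det(M^{-1})\det(M-I)=(-1)^{\binom nk}\det(I-M)$ with $M=\bigwedge^k A$ and $\det M=1$, and for the characteristic polynomial you use the factorization $xI-A^{-1}=A^{-1}(xA-I)$; the paper also uses that factorization to note that positivity passes from $A$ to $A^{-1}$, which the statement does not ask for.
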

\begin{proof}
Let $A \in \operatorname{SL}(n;\mathbb{Z})$ be a Cappell-Shaneson matrix. There exists $A^{-1} \in \operatorname{SL}(n;\mathbb{Z})$. 
\begin{align*}
    \det(I-\bigwedge^k A^{-1})&=\det(I-(\bigwedge^k A)^{-1}) \\
                            &= \det(\bigwedge^k A)\det(I-(\bigwedge^k A)^{-1})\\
                            &=(-1)^{{}_{n} C_k}\det(I-\bigwedge^k A)=\pm 1
\end{align*}
since $\det(\bigwedge^k A)=1$ and $\det(I-\bigwedge^k A) = \pm 1$.
So, $A^{-1}$ is a Cappell-Shaneson matrix. In addition, $A^{-1}$ satisfies positivity since 
\begin{align*}
    (-1)^n\det(xI-A^{-1})&= \det(A)\det(xI-A^{-1})\\
                   &=(-x)^{n}(-1)^n\det(x^{-1}I-A).
\end{align*}
\end{proof}

\begin{remark}
    Let $A \in \operatorname{SL}(n;\mathbb{Z})$ be a Cappell-Shaneson matrix whose characteristic polynomial is $f$. When $n \ge 4$, $f \ne f^*$.
\end{remark}
\begin{proof}
    If $f = f^*$, the characteristic polynomial of $A$ is equal to that of $A^{-1}$. For an eigenvalue $\lambda$ of $A$, $\lambda^{-1}$ is also an eigenvalue of $A$. If $\lambda^{-1}=\lambda$, $\lambda=\pm 1$. This contradicts irreducibility of the characteristic polynomial. Therefore, $\lambda^{-1} \ne \lambda$ and $1$ is an eigenvalue of $\bigwedge^2 A$. This contradicts the condition $\mathrm{CS}_2$ of the definition of the Cappell-Shaneson matrices for $n \ge 4$.
\end{proof}

For two Cappell-Shaneson matrices $A, B$, let $f_A,f_B$ be the characteristic polynomial of $A,B$, respectively. If $A \sim_* B$, then $f_A = f_B$ or $f_A=f_B^*$. 

\begin{theorem}    
    Let $f(x) \in \mathbb{Z}[x]$ be a Cappell-Shaneson polynomial. Let $\theta$ be a root of $f(x)$. Then, there is a one-to-one correspondence between $C(\mathbb{Z}[\theta])$ and $\{A\mid f(A)=O\}\cup\{A\mid f^*(A)=O\}/\sim_*$. The following two maps are bijective and $\overline{\phi_f}^{-1} = \overline{\psi_f}$.
     \begin{itemize}
         \item Let $v=[v_1, \ldots, v_n]^\mathrm{T}\in\mathbb{Z}[\theta]^n$ be an eigenvector of $A$ corresponding to $\theta$ and $S$ a $\mathbb{Z}$-submodule of $\mathbb{Z}[\theta]$ generated by $v_1,\ldots,v_n$. Define $\overline{\phi_f} : \{A\mid f(A)=O\}\cup\{A\mid f^*(A)=O\}/\sim_* \rightarrow C(\mathbb{Z}[\theta])$ as $\overline{\phi_f}([A])=[S]$. 
         \item Let $\{v_1, \ldots, v_n\}$ be a basis of an ideal $S$ and $v=[v_1,\ldots,v_n]^\mathrm{T}$. There is $A\in \operatorname{M}(n;\mathbb{Z})$ such that $\theta v=A v$. Define $\overline{\psi_f} : C(\mathbb{Z}[\theta]) \rightarrow \{A\mid f(A)=O\}\cup\{A\mid f^*(A)=O\}/\sim_*$ as $\overline{\psi_f}([S])=[A]$.
     \end{itemize}
\end{theorem}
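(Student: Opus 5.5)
The plan is to deduce the statement from the Latimer--MacDuffee--Taussky correspondence (Theorem~\ref{theorem:LMT}) together with the observation that, by the Remark, $f\neq f^*$. The first step is to describe the $*$-classes in the set $X=\{A\mid f(A)=O\}\cup\{A\mid f^*(A)=O\}$. Since $f$ is irreducible (Theorem~\ref{theorem:irr}), so is $f^*$. By Theorem~\ref{theorem:irr2}, $X$ is exactly the set of integer matrices whose characteristic polynomial is $f$ or $f^*$. By Theorem~\ref{theorem:csmulinverse}, inversion $A\mapsto A^{-1}$ is a bijection between these two pieces; it is an involution and commutes with $\operatorname{GL}(n;\mathbb{Z})$-conjugation. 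Because $f\neq f^*$, a matrix with characteristic polynomial $f$ is never conjugate to one with characteristic polynomial $f^*$. Hence every $*$-class in $X$ meets $\{A\mid f(A)=O\}$ in exactly one conjugacy class. This gives a bijection
\[
\{A\mid f(A)=O\}/\sim_C \;\longrightarrow\; X/\sim_*,
\]
induced by inclusion: it is surjective because any $B$ with $f^*(B)=O$ is $*$-equivalent to $B^{-1}$, and injective because $A\sim_* A'$ with both in the $f$-part forces $A$ to be conjugate to $A'$ (conjugacy to $A'^{-1}$ is impossible by comparing characteristic polynomials).

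The second step composes this bijection with $\phi_f$ and $\psi_f$ from Theorem~\ref{theorem:LMT}, which gives $\overline{\psi_f}$ and $\overline{\phi_f}$. One must check that the explicit formula in the statement for $\overline{\phi_f}$ is well defined on all of $X$, including the $f^*$-part. For $B$ with $f^*(B)=O$, an eigenvector "corresponding to $\theta$" should be read through $B^{-1}$. If $v$ is an eigenvector of $B^{-1}$ with eigenvalue $\theta$, then $Bv=\theta^{-1}v$. Since $\det B=\pm1$, $\theta$ is a unit in $\mathbb{Z}[\theta]$ and $\theta^{-1}\in\mathbb{Z}[\theta]$, so the $\mathbb{Z}$-span $S$ of the entries of $v$ is a $\mathbb{Z}[\theta]$-ideal. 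Therefore $\overline{\phi_f}([B])=\phi_f([B^{-1}])$, consistent with the bijection of the first step.

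The remaining checks are as follows. The class $[S]$ does not depend on the chosen eigenvector, since eigenvectors differ by a scalar in $\mathbb{Q}(\theta)$; clearing denominators gives $\alpha S=\beta S'$. It also does not depend on the representative: a conjugate $DAD^{-1}$ has eigenvector $Dv$, whose entries span the same $\mathbb{Z}$-module because $D\in\operatorname{GL}(n;\mathbb{Z})$. Both facts are already contained in Theorem~\ref{theorem:LMT}. The relation $\overline{\phi_f}^{-1}=\overline{\psi_f}$ then follows from $\phi_f^{-1}=\psi_f$.

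The main obstacle is the bookkeeping for the $f^*$-part, namely making precise which eigenvector is meant there. One must also make sure the image of $\overline{\psi_f}$ is interpreted as the $*$-class of a matrix with characteristic polynomial $f$, so that no $*$-class is counted twice. Both points reduce to $f\ne f^*$ and to $\theta$ being a unit.
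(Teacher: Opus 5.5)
Your proposal is correct and follows essentially the same route as the paper: both reduce to the Latimer--MacDuffee--Taussky correspondence and check that $B$ and $B^{-1}$ give the same ideal class, since an eigenvector of $B^{-1}$ for $\theta$ is an eigenvector of $B$ for $\theta^{-1}$. The differences are only presentational. The paper handles the $f^*$-part through $\phi_{f^*}$ and the identity $\mathbb{Z}[\theta]=\mathbb{Z}[\theta^{-1}]$, whereas you pass through $B^{-1}$ directly and state explicitly the use of $f\neq f^*$ (from the Remark, $n\ge 4$), which the paper's argument needs but leaves implicit.
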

\begin{proof}
    $\phi_{f^*}$ is a bijection between $\{A\mid f^*(A)=O\}/\sim_C$ and $C(\mathbb{Z}[\theta^{-1}])$ since $\theta^{-1}$ is a root of $f^*$. When $f(x)=x^n+c_{n-1}x^{n-1}+\cdots+c_1x+(-1)^n$, $\theta(c_{n-1}\theta^{n-1}+\cdots+c_1\theta)=(-1)^{n+1}$ holds since $f(\theta)=0$. So, $\theta^{-1}=(-1)^{n+1}(c_{n-1}\theta^{n-1}+\cdots+c_1\theta)\in \mathbb{Z}[\theta]$. $\theta \in \mathbb{Z}[\theta^{-1}]$ also holds by the same argument. Therefore, $\mathbb{Z}[\theta]=\mathbb{Z}[\theta^{-1}]$ and $\phi_{f^*}$ is a bijection between $\{A\mid f^*(A)=O\}/\sim_C$ and $C(\mathbb{Z}[\theta])$. 

    For $[B] \in \{A\mid f^*(A)=O\}/\sim_C$, let $v=[ v_1, \ldots, v_n ]^\mathrm{T}$ be an eigenvector of $B$ corresponding to $\theta$ such that all elements are included in $\mathbb{Z}[\theta]$. $\phi_f$ maps $[B]$ to an ideal class $[\langle v_1, \ldots, v_{n+1} \rangle_\mathbb{Z}]$. On the other hand, $v$ is an eigenvector of $B^{-1}$ corresponding to $\theta^{-1}$.  $\phi_{f^*}$ maps $[B^{-1}]$ to an ideal class $[\langle v_1, \ldots, v_{n+1} \rangle_\mathbb{Z}]$. So, $\overline{\phi_f}$ is well-defined.

    See Theorem~\ref{theorem:LMT} for bijectivity.
\end{proof}

    Therefore, the classification of Cappell-Shaneson matrices is equivalent to that of ideal classes. If we can compute the corresponding ideal class monoid explicitly, we can classify Cappell-Shaneson matrices for a given Cappell-Shaneson polynomial. We can sometimes compute it if $\mathbb{Z}[\theta]$ is integrally closed. In that case, the ideal class monoid is an ideal class group. We can use many methods from the number theory. 
    \begin{definition}
    Let $R$ be an integral domain and $K$ the fraction field of $R$. We say that $R$ is \textit{integrally closed} if and only if the following condition holds: if $\alpha \in K$ is a root of a monic polynomial in $R[x]$, $\alpha \in R$.
    \end{definition}
    
    \begin{theorem}[{\cite[Theorem 4.4]{Kim-Yamada:2017-1}}]
    If $\mathbb{Z}[\theta]$ is integrally closed, $C(\mathbb{Z}[\theta])$ is a ideal class group. 
    \end{theorem}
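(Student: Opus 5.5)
The plan is to show two things: every nonzero ideal of $R=\mathbb{Z}[\theta]$ is invertible, and the invertible classes form a group that coincides with the usual ideal class group $\mathrm{Cl}(R)$.

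First, we would identify $R$ as a Dedekind domain. Here $\theta$ is a root of the irreducible monic polynomial $f$ of degree $n$ (Theorem~\ref{theorem:irr}), so $R$ is a free $\mathbb{Z}$-module of rank $n$. This gives two properties. Every nonzero ideal $I$ has finite index in $R$, since it contains the nonzero integer $N_{K/\mathbb{Q}}(\alpha)=\alpha\cdot\alpha'$ for any $0\neq\alpha\in I$, where $\alpha'\in R$ is the product of the other conjugates divided appropriately. Consequently $R$ is Noetherian and every nonzero prime ideal $\mathfrak p$ is maximal, because $R/\mathfrak p$ is a finite integral domain and hence a field. Combined with the hypothesis that $R$ is integrally closed, $R$ is a Dedekind domain. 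In fact $R$ is then the full ring of integers $\mathcal{O}_K$ of $K=\mathbb{Q}(\theta)$.

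Next, we would invoke the standard Dedekind theory: every nonzero ideal $I$ admits an inverse fractional ideal $I^{-1}=\{x\in K: xI\subset R\}$ with $II^{-1}=R$. The main step, which I expect to be the only real obstacle since it carries all the commutative-algebra content, is to translate this into the monoid $C(R)$. For that we would choose a nonzero $d\in R$ with $J:=dI^{-1}\subset R$, which is possible because $I^{-1}$ is finitely generated. Then $IJ=dR$, so $[I][J]=[dR]=[R]$ in $C(R)$, since $1\cdot dR=d\cdot R$. Thus every class is invertible, and $C(R)$ is a group.

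Finally, we would check that the equivalence $\alpha I=\beta J$ is exactly the equivalence of differing by a principal fractional ideal, namely $I=(\beta/\alpha)J$. Consequently $C(R)$ is canonically isomorphic to the group of fractional ideals modulo principal ones, that is, the ideal class group. This last step is routine bookkeeping once invertibility is established.
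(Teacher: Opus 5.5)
Your proof is correct. The paper gives no proof of this statement; it imports it from Kim--Yamada. So your argument is a self-contained replacement for a citation, not a variant of something in the text. Your route is the standard one: $\mathbb{Z}[\theta]$ is free of rank $n$ over $\mathbb{Z}$, hence Noetherian with every nonzero prime maximal, hence Dedekind once integrally closed. Then $II^{-1}=R$ gives $[I][dI^{-1}]=[dR]=[R]$ in $C(R)$.

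It also makes explicit that the hypothesis forces $\mathbb{Z}[\theta]=\mathcal{O}_K$. That equality is what justifies the paper's SageMath snippet, which tests \texttt{O.is\_integrally\_closed()} and then reports \texttt{K.class\_group()}, the class group of the maximal order, as the order of $C(\mathbb{Z}[\theta])$.

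Three small tightenings:
\begin{itemize}
\item The element $\alpha'$ is simply the product of the other conjugates, $N_{K/\mathbb{Q}}(\alpha)/\alpha$; no further division is needed. It lies in $R$ for either of two reasons. It is an integral element of $K$ and $R$ is integrally closed. Alternatively, with no hypothesis at all, Cayley--Hamilton for multiplication by $\alpha$ on $R\cong\mathbb{Z}^n$ gives $N(\alpha)=\pm\alpha\, g(\alpha)$ with $g\in\mathbb{Z}[x]$.
\item For $d$ you may take any nonzero element of $I$, since $dI^{-1}\subseteq II^{-1}=R$. Finite generation of $I^{-1}$ is not needed.
\item In the final identification with the class group, also state surjectivity. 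Every fractional ideal $F$ satisfies $dF\subseteq R$ for some nonzero $d\in R$, so its class contains an integral ideal.
\end{itemize}
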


    Besides, we introduce Kummer-Dedekind criterion to consider Cappell-Shaneson matrices that are not $*$-equivalent to companion matrices.
    \begin{theorem}[{\cite[Theorem 8.2]{Stevenhagen:2008-1}}]\label{theorem:kummerdedekind}
        Let $p$ be a prime number and $f$ an irreducible monic polynomial. We denote by $\theta$ a root of $f$. Let us suppose $\bar{f} \in \mathbb{F}_p[x]$ can be factorized into $\Pi_i \bar{g_i}^{e_i}$. Pick $g_k$ which is a monic polynomial. Let $r_k \in \mathbb{Z}[x]$ be a remainder when $f$ is divided by $g_k$ in $\mathbb{Z}[x]$. Then, an ideal $(p, g_k(\theta))$ is invertible if and only if $e_k=1$ or $p^2 \nmid r_k$ in $\mathbb{Z}[x]$.
    \end{theorem}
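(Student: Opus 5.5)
We would identify $R=\mathbb{Z}[\theta]$ with $\mathbb{Z}[x]/(f)$, write $g=g_k$, $e=e_k$, $r=r_k$, and set $P=(p,g(\theta))$. Since $R/P\cong \mathbb{F}_p[x]/(\bar g)$ is a field, $P$ is a maximal ideal with residue field $\kappa=\mathbb{F}_p[x]/(\bar g)$; this uses that $\bar g$ is irreducible, which we take to be the intended reading of the factorization. We would use the standard fact that a nonzero ideal supported at a single maximal ideal is invertible iff its localization is principal. For $P$ this means: $P$ is invertible iff $PR_P$ is principal, iff $\dim_\kappa P/P^2=1$. Everything then reduces to a local computation at $P$.

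\textbf{Preliminaries.} Dividing by the monic $g$ gives $f=qg+r$ with $\deg r<\deg g$. Reducing mod $p$ and using uniqueness of division by a monic polynomial, $\bar g\mid\bar f$ forces $\bar r=0$, so $r=p s$ with $s\in\mathbb{Z}[x]$. Moreover $\bar f=\bar q\bar g$, so $\bar g\mid \bar q$ exactly when $e\ge2$. Evaluating at $\theta$ gives the key relation
\[
q(\theta)g(\theta)=-p\,s(\theta).
\]

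\textbf{The case $e=1$.} Here $\bar q$ is prime to $\bar g$, so $q(\theta)\notin P$ and $q(\theta)$ is a unit in $R_P$. The key relation then gives $g(\theta)\in pR_P$, hence $PR_P=pR_P$ is principal.

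\textbf{The case $e\ge 2$ and $p^2\nmid r$.} Now $q(\theta)\in P$. Since $p^2\nmid r$, we have $\bar s\neq0$, and $\deg\bar s<\deg\bar g$, so $\bar g\nmid\bar s$. Hence $s(\theta)\notin P$ and $s(\theta)$ is a unit in $R_P$. The key relation gives $p=-q(\theta)g(\theta)s(\theta)^{-1}\in g(\theta)R_P$, so $PR_P=g(\theta)R_P$ is principal.

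\textbf{The case $e\ge2$ and $p^2\mid r$.} This is the main obstacle, since one must exhibit non-invertibility concretely. We would compute
\[
R/P^2=\mathbb{Z}[x]/(f,p^2,pg,g^2).
\]
Write $\bar q=\bar g\bar h$, i.e.\ $q=gh+pt$. Then $f=qg+r=g^2h+p\,tg+r$, and each term lies in $(g^2,pg,p^2)$ because $p^2\mid r$. So $f$ is redundant and $R/P^2\cong\mathbb{Z}[x]/(p^2,pg,g^2)$. In this ring, $P/P^2$ is spanned over $\kappa$ by the classes of $p$ and $g$. We would check they are $\kappa$-independent: a relation $ap+bg\in(p^2,pg,g^2)$ with $a,b$ not both in $(p,g)$ fails, which one sees by comparing $p$-adic and $g$-adic degrees in $\mathbb{Z}[x]$. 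This uses that $\mathbb{Z}[x]$ is a UFD and that $g$ is monic with irreducible reduction. Thus $\dim_\kappa P/P^2=2$, $PR_P$ is not principal, and $P$ is not invertible.

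Together the three cases show that $P$ is invertible iff $e_k=1$ or $p^2\nmid r_k$.
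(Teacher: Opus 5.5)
The paper gives no proof of this statement (it is quoted from Stevenhagen's Theorem 8.2). Your argument is correct and is essentially the standard proof of that theorem. You reduce invertibility of the maximal ideal $P$ to $PR_P$ being principal, equivalently $\dim_\kappa P/P^2=1$. The relation $q(\theta)g(\theta)=-p\,s(\theta)$ then gives $PR_P=pR_P$ when $e_k=1$, and $PR_P=g(\theta)R_P$ when $p^2\nmid r_k$. Finally, $f\in(p,g)^2$ gives $R/P^2\cong\mathbb{Z}[x]/(p,g)^2$, hence $\dim_\kappa P/P^2=2$, when $e_k\ge2$ and $p^2\mid r_k$.

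The one step you leave vague is the $\kappa$-independence of $p$ and $g$ modulo $(p,g)^2$. It is cleanest to write a relation as $ap+bg=\alpha p^2+\beta pg+\gamma g^2$ and rearrange it to $p(a-\alpha p-\beta g)=g(\gamma g-b)$. Now $p$ and $g$ are non-associate primes of $\mathbb{Z}[x]$; $g$ is irreducible because it is monic with irreducible reduction. Hence $p\mid \gamma g-b$ and $g\mid a-\alpha p-\beta g$, so $a,b\in(p,g)$.

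Also, your claim that $\bar g\mid\bar q$ exactly when $e\ge2$ needs the $\bar g_i$ to be pairwise distinct as well as irreducible. That is part of the intended reading of the factorization, but you should state it alongside the irreducibility assumption.
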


    \begin{corollary}\label{corollary:sourceofinffamily}
    Let $p$ be a prime number and $f$ an irreducible monic polynomial with $deg(f) = n$. We denote by $\theta$ a root of $f$. Let us suppose $\bar{f} \in \mathbb{F}_p[x]$ can be factorized into $\Pi_i \bar{g_i}^{e_i}$. Let us suppose $g_k(x)=x-b$. Let $r_k \in \mathbb{Z}[x]$ be a remainder when $f$ is divided by $x - b$ in $\mathbb{Z}[x]$. If $e_k \neq 1$ and $p^2 \nmid r_k$ in $\mathbb{Z}[x]$, then $(p, \theta - b)$ is not invertible and $\{p, \theta - b, \theta ( \theta - b), \ldots , \theta^{n-2} (\theta - b)\}$ is a $\mathbb{Z}$-basis of $(p, \theta -b)$.
    \end{corollary}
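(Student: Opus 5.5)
Non-invertibility of $(p,\theta-b)$ follows at once from Theorem~\ref{theorem:kummerdedekind}. With $g_k=x-b$ monic, that theorem says $(p,\theta-b)$ is invertible if and only if $e_k=1$ or $p^2\nmid r_k$. Note that $r_k=f(b)$ is a constant.

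The stated hypothesis ``$p^2\nmid r_k$'' would make the ideal invertible, so the intended hypothesis must be $p^2\mid r_k$, i.e.\ $p^2\mid f(b)$. I will prove the corollary in that form. Then both conditions fail ($e_k\ge 2$ and $p^2\mid r_k$), and the theorem gives non-invertibility.

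For the $\mathbb{Z}$-basis, the plan is to compare the ideal $I=(p,\theta-b)$ of $\mathbb{Z}[\theta]$ with the $\mathbb{Z}$-module
\[
L=\langle p,\ \theta-b,\ \theta(\theta-b),\ldots,\theta^{n-2}(\theta-b)\rangle_{\mathbb{Z}} .
\]

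\emph{Step 1: the index of $I$.} Consider the ring map $\mathbb{Z}[\theta]\cong\mathbb{Z}[x]/(f)\to\mathbb{F}_p$ sending $x\mapsto b$. It is well defined because $f(b)\equiv 0 \pmod p$, which holds since $x-b$ divides $\bar f$. This map is surjective with kernel exactly $I$, so $[\mathbb{Z}[\theta]:I]=p$.

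\emph{Step 2: $L$ is a lattice of index $p$ contained in $I$.} Work in the $\mathbb{Z}$-basis $1,\theta,\ldots,\theta^{n-1}$. The elements $\theta^{j}(\theta-b)$ for $0\le j\le n-2$ have leading term $\theta^{j+1}$. Together with $p$, their coefficient matrix is triangular with diagonal $(p,1,\ldots,1)$, so $L$ is free of rank $n$ with index $p$. Clearly $L\subseteq I$. Since $I$ also has index $p$, we conclude $L=I$.

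\emph{Step 3: linear independence.} The determinant computation in Step 2 already shows the $n$ listed elements are linearly independent, so they form a $\mathbb{Z}$-basis of $I$.

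The only delicate point is the hypothesis mismatch ($p^2\nmid r_k$ versus $p^2\mid r_k$) in the invertibility claim. As argued above, the correct reading is $p^2\mid r_k$. The basis statement itself needs only $p\mid f(b)$ and no condition on $e_k$.
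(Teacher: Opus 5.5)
Your proof is correct, and you are right that the hypothesis has to be read as $p^2\mid r_k$. For $g_k=x-b$ we have $r_k=f(b)$, and the quoted Kummer--Dedekind criterion makes $(p,\theta-b)$ invertible when $p^2\nmid r_k$. Every application in Section~\ref{section:example} has $p^2\mid r_k$; for example, $r_k=242l-121=11^2(2l-1)$. The paper's one-line appeal to Theorem~\ref{theorem:kummerdedekind} tacitly uses this reading too.

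The non-invertibility step is therefore the same as in the paper, but your basis argument takes a different route. The paper proves $I\subseteq L$ directly. It checks that each $\mathbb{Z}$-generator $p\theta^j$, $(\theta-b)\theta^j$ of $I$ lies in $L$. It rewrites $\theta^{n-1}(\theta-b)$ using $f(\theta)=q(\theta)(\theta-b)+r_k=0$ together with $p\mid r_k$, and handles $p\theta^j$ by expanding $\theta^j$ in powers of $\theta-b$. Linear independence is left implicit there.

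You instead compute both indices. You get $[\mathbb{Z}[\theta]:I]=p$ from the evaluation map $x\mapsto b$, and $[\mathbb{Z}[\theta]:L]=p$ from the bidiagonal coefficient matrix with diagonal $(p,1,\dots,1)$. Then $L\subseteq I$ forces $L=I$, and independence comes for free. Both arguments use only $p\mid f(b)$ for the basis claim.

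Your route is shorter and proves independence explicitly. The paper's explicit reduction has a practical payoff: the resulting expression of $\theta\cdot\theta^{n-2}(\theta-b)$ in the basis is exactly the last row of the matrix $A$ with $\theta v=Av$ written down in the examples of Section~\ref{section:example}.
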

    \begin{proof}
        $(p, \theta - b)$ is not invertible by Theorem~\ref{theorem:kummerdedekind}. Let $\langle p, \theta - b, \theta( \theta - b)  , \ldots , \theta^{n-2} (\theta - b) \rangle$ be a set of all linear combinations, which is a subset of $(p, \theta - b)$. 

        We show $p \theta^{j}, (\theta-b) \theta^{j} \in \langle p, \theta - b, \theta ( \theta - b), \ldots , \theta^{n-2} (\theta - b) \rangle$ for $j \in \{1, \ldots, n-1\}$ to show $(p, \theta - b)$ is a subset of $\langle p, \theta - b, \theta ( \theta - b), \ldots , \theta^{n-2} (\theta - b) \rangle$. If $j=1, \ldots , n-2$, $(\theta-b) \theta^{j} \in \langle p, \theta - b, \theta ( \theta - b), \ldots , \theta^{n-2} (\theta - b) \rangle$ for $j \in \{1, \ldots, n-1\}$ clearly. Let us suppose $f(x) = q(x)(x-b)+r_k$ where $q(x) \in \mathbb{Z}[x]$.
        \[
            (\theta -b) \theta^{n-1} = -(\theta - b)(q(x)-\theta^{n-1}) + r_k
        \] since $f(\theta)=0$. $(\theta - b) \theta^{n-1}$ is an element of $\langle p, \theta - b, \theta ( \theta - b), \ldots , \theta^{n-2} (\theta - b) \rangle$ since $deg(q(x)-\theta^{n-1}) \le n-2$ and $p\mid r_k$. We realize that $p \theta^{j} \in \langle p, \theta - b, \theta ( \theta - b), \ldots , \theta^{n-2} (\theta - b) \rangle$ for $j \in \{1, \ldots, n-1\}$ through the Taylor expansion of $\theta^j$ at $x=b$.
    \end{proof}

    This corollary shows how to find non-invertible ideal classes but can be used to find Cappell-Shaneson matrices that are not $*$-equivalent to companion matrices. That is because matrices corresponding to non-invertible ideal classes are not $*$-equivalent to companion matrices. Companion matrices correspond to the identity element by Theorem~\ref{theorem:LMT} and it is trivially invertible.
    
\section{Examples of new Cappell-Shaneson knot pairs}\label{section:example}
    In this section, we give new examples of non-reflexive $(n-1)$-knots using Cappell and Shaneson's method for $n=4,5,6,7$. First, we choose a Cappell-Shaneson polynomial as an Alexander polynomial of non-reflexive knots. Next, we classify ideal classes corresponding to it. Lastly, we obtain non-reflexive knots from Cappell-Shaneson matrices corresponding to the ideal classes.
\subsection{Cappell-Shaneson knot pairs for \texorpdfstring{$n=4$}{n=4}}
    We give new Cappell-Shaneson knot pairs for $n=4$. We start from a positive Cappell-Shaneson polynomial $x^4+ax^3+(-2a-2)x^2+(a-1)x+1$ with $a \le 0$, which is newly found in our paper \cite{Endo-Iwaki-Pajitnov:2025}. We want to compute the ideal class monoid $C(\mathbb{Z}[\theta_a])$, where $\theta_a$ is a root of $x^4+ax^3+(-2a-2)x^2+(a-1)x+1$. We can know when $C(\mathbb{Z}[\theta_a])$ is an ideal class group and when it is not trivial by the following code for SageMath\cite{Sage:2019-1}. See Figure~\ref{figure:cs4} for the result of this snippet.
    \begin{verbatim}
    for a in range(-100, 1):
        f = x^4 + a*x^3+(-2*a-2)*x^2+(a-1)*x+1
        K.<theta> = NumberField(f)
        O = K.order(theta)
        if O.is_integrally_closed():
            G = K.class_group()
            print(a, G.order())
    \end{verbatim}

    \begin{figure}[htbp]
    \centering
    \includegraphics[scale=0.6]{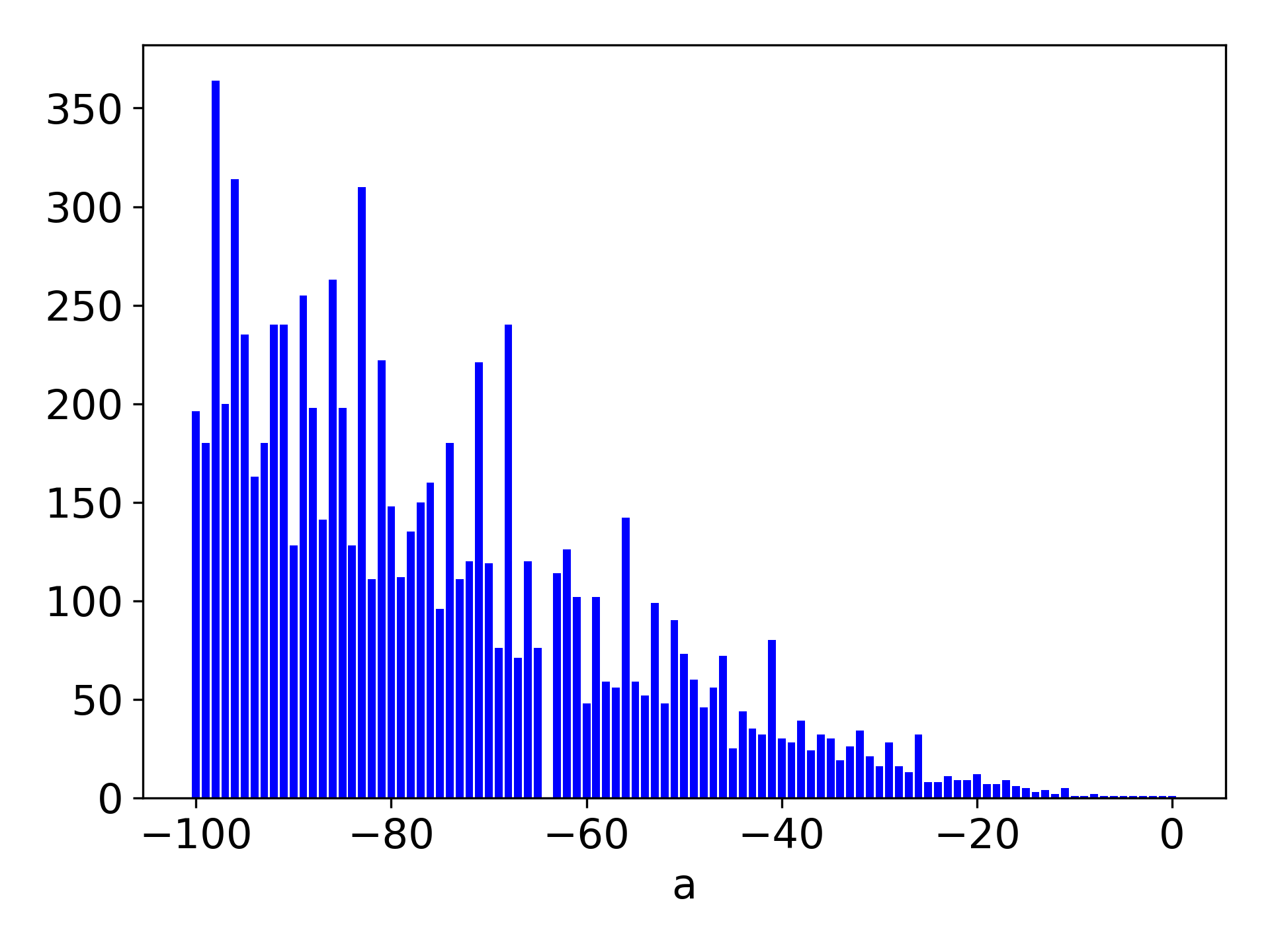}
    \caption{Orders of ideal class groups corresponding to $x^4+ax^3+(-2a-2)x^2+(a-1)x+1$.}
    \label{figure:cs4}
    \end{figure}

    The bar is missing if $C(\mathbb{Z}[\theta_a])$ is not a group for the value. $a=-64$ is the only value in this figure for which $C(\mathbb{Z}[\theta_a])$ is not a group. The y-axis represents the order of $C(\mathbb{Z}[\theta_a])$, which is the number of Cappell-Shaneson knot pairs. Those knot pairs are not equivalent to known Cappell-Shaneson knot pairs since the first Alexander polyomials are different. In addition, we realize that there are many distinct Cappell-Shaneson knot pairs which are not distinguishable by the first Alexander polynomial. $a=-8$ is the largest value that $C(\mathbb{Z}[\theta_a])$ is not trivial. The order of $C(\mathbb{Z}[\theta_{-8}])$ is equal to $2$. This means that there exist two inequivalent Cappell-Shaneson knot pairs whose Alexander polynomials are equal to $x^4-8x^3+14x^2-9x+1$. One pair is obtained by the companion matrix 

    \[
        \begin{bmatrix}
        0 & 1 & 0 & 0\\
        0 & 0 & 1 & 0\\
        0 & 0 & 0 & 1\\
        -1 & 9 & -14 & 8\\    
        \end{bmatrix}.
    \]

    Another pair is obtained from 

    \[
        \begin{bmatrix}
        2 & 3 & 0 & 0\\
        2 & 4 & 1 & 0\\
        0 & 1 & 1 & 1\\
        1 & 2 & 0 & 1\\    
        \end{bmatrix}.
    \]
    This matrix corresponds to $[(3, \theta_{-8}^3 -7 \theta_{-8}^2 +7 \theta_{-8})]$, which is the generator of $C(\mathbb{Z}[\theta_{-8}])$. 
    
    Note that $a=-25$ is the largest value for which the ideal class group $C(\mathbb{Z}[\theta_a])$ has a structure other than the cyclic group, $C(\mathbb{Z}[\theta_{-25}])=C_4\times C_2$. In this case, there are $8$ inequivalent knot pairs that cannot be distinguished by the Alexander polynomial.

    In addition, we give infinitely many examples of Cappell-Shaneson knot pairs that are not distinguishable by the first Alexander polynomial.

\begin{theorem}
    The following two matrices are positive Cappell-Shaneson matrices for any $l \le 0$. These matrices are not $*$-equivalent and give us inequivalent Cappell-Shaneson knot pairs with the same Alexander polynomial:
    \[
        \begin{bmatrix}
    		0 & 1 & 0 & 0 \\
    		0 & 0 & 1 & 0 \\
    		0 & 0 & 0 & 1\\
            -1 & -121l+65 & 242l-126 & -121l+64\\
    		\end{bmatrix},
    \]
      \[
        \begin{bmatrix}
    		2 & 11 & 0 & 0 \\
    		0 & 0 & 1 & 0 \\
    		0 & 0 & 0 & 1\\
            -22l+11 & -121l+61 & -2 & -121l+62\\
    		\end{bmatrix}.
    \]

\end{theorem}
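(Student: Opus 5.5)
The first matrix, call it $A_l$, is the companion matrix of
\[
f_a(x)=x^4+ax^3+(-2a-2)x^2+(a-1)x+1,\qquad a=121l-64 .
\]
To see this, read off the last row: $-(a-1)=-121l+65$, $-(-2a-2)=242l-126$ and $-a=-121l+64$. Since $l\le 0$ gives $a\le -64\le 0$, the polynomial $f_a$ is a positive Cappell-Shaneson polynomial by our earlier result \cite{Endo-Iwaki-Pajitnov:2025}. Hence $A_l$ is a positive Cappell-Shaneson matrix.

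For the second matrix $B_l$, I would verify directly that $f_a(B_l)=O$; alternatively, one can expand $\det(xI-B_l)$ along the first row, which is a routine computation. Since $f_a$ is irreducible (Theorem~\ref{theorem:irr}), Theorem~\ref{theorem:irr2} gives that $f_a$ is the characteristic polynomial of $B_l$. So $B_l$ is also a positive Cappell-Shaneson matrix: both the conditions $\mathrm{CS}_k$ and positivity depend only on the characteristic polynomial. In particular, both pairs $\PPPP(A_l)$ and $\PPPP(B_l)$ have Alexander polynomial $f_a$.

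It remains to show $A_l\not\stss B_l$; the preceding classification theorem then makes the knot pairs inequivalent. Conjugacy of $A_l$ to $B_l^{-1}$ is impossible, because $B_l^{-1}$ has characteristic polynomial $f_a^*\neq f_a$ by the Remark (we have $n=4$). So the main point is to show that $A_l$ and $B_l$ are not conjugate in $\operatorname{GL}(4;\mathbb{Z})$.

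The natural first idea is Corollary~\ref{corollary:sourceofinffamily} with $p=11$, since $f_a\equiv x^4+2x^3+5x^2+x+1 \pmod{11}$ has the double root $b=2$. However, a quick check shows $f_a(2)=121(2l-1)$ and $f_a'(2)=11(55l-27)$. Therefore every lift $b\equiv 2$ has $121\mid f_a(b)$, and the ideal $(11,\theta-2)$ is invertible, so that route fails. Instead I would argue by reduction modulo $11$, which is the key step. A conjugacy over $\mathbb{Z}$ reduces to a conjugacy over $\mathbb{F}_{11}$.

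The reduction of a companion matrix is again a companion matrix, so $\bar A_l$ is cyclic: its minimal polynomial equals its characteristic polynomial, and every eigenvalue has geometric multiplicity $1$. On the other hand, $-22l+11\equiv0$, $-121l+61\equiv 6$ and $-121l+62\equiv 7 \pmod{11}$. Hence
\[
\bar B_l=\begin{bmatrix}2&0&0&0\\0&0&1&0\\0&0&0&1\\0&6&-2&7\end{bmatrix}
\]
is block diagonal, namely $(2)\oplus C$ with $C$ a $3\times3$ companion matrix. Since $\bar f_a=(x-2)\det(xI-C)$ has $2$ as a double root, $C$ also has eigenvalue $2$. Thus $\ker(\bar B_l-2I)$ contains $e_1$ together with an eigenvector supported on the last three coordinates, so it has dimension at least $2$. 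Therefore $\bar B_l$ is not cyclic, so $\bar A_l$ and $\bar B_l$ are not conjugate over $\mathbb{F}_{11}$, and hence $A_l$ and $B_l$ are not conjugate over $\mathbb{Z}$.

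Equivalently, via Theorem~\ref{theorem:LMT}, $B_l$ corresponds to a non-principal ideal class. The only real computation is the characteristic polynomial of $B_l$. This is where I expect the main (bookkeeping) obstacle, and I would double-check it symbolically in $l$.
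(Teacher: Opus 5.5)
Your proof is correct, but it establishes non-conjugacy by a different route from the paper.

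\textbf{The paper's route.} The paper recognizes $B_l$ as the matrix of multiplication by $\theta$ on the $\mathbb{Z}$-basis $11,\ \theta-2,\ \theta(\theta-2),\ \theta^2(\theta-2)$ of the ideal $(11,\theta-2)$. This gives its characteristic polynomial $f_a$ for free. The paper then shows via Kummer--Dedekind that this ideal is not invertible. It concludes, through the Latimer--MacDuffee--Taussky correspondence in its $*$-version, that $B_l$ is not $*$-equivalent to the companion matrix, whose class is the identity of $C(\mathbb{Z}[\theta])$.

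\textbf{Your route.} You check $\det(xI-B_l)=(x-2)\bigl(x^3+(121l-62)x^2+2x+121l-61\bigr)+242l-121=f_a$ by hand. You rule out $B_l^{-1}$ via $f_a^*\neq f_a$. You then separate $A_l$ from $B_l$ by an elementary conjugacy invariant over $\mathbb{F}_{11}$: the eigenvalue $2$ has geometric multiplicity $1$ for $\bar A_l$ but at least $2$ for $\bar B_l=(2)\oplus C$. Here $\det(xI-C)=x^3-7x^2+2x-6$ does vanish at $2$ modulo $11$. Your argument is self-contained and avoids all the imported number theory, including the basis statement of Corollary~\ref{corollary:sourceofinffamily}. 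The paper's approach, in exchange, explains where $B_l$ comes from and places it in the general classification by ideal class monoids.

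\textbf{An incorrect aside.} Your remark that ``the ideal $(11,\theta-2)$ is invertible, so that route fails'' is wrong. Theorem~\ref{theorem:kummerdedekind} says $(p,\theta-b)$ is invertible iff $e_k=1$ or $p^2\nmid r_k$. Here $e_k=2$ and $r_k=f_a(2)=121(2l-1)$, so the ideal is \emph{not} invertible.

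The condition ``$p^2\nmid r_k$'' in the statement of Corollary~\ref{corollary:sourceofinffamily} is a misprint for ``$p^2\mid r_k$''. Its proof just invokes the theorem, and every example in the paper has $p^2\mid r_k$. The paper's proof of this statement is exactly the route you dismissed.

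Your own reduction confirms this. If $I=(11,\theta-2)$ were invertible, $I/11I$ would be free of rank one over $\mathbb{Z}[\theta]/(11)$, which would force the reduction of $B_l$ to be cyclic. So your mod-$11$ computation is a concrete local witness of the same non-invertibility, which is stronger than the ``non-principal'' you state at the end. The aside plays no role in your argument, so correctness is unaffected, but you should remove or correct it.
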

\begin{proof}
    Define $f_a(x)$ as $x^4+ax^3+(-2a-2)x^2+(a-1)x+1$, which is a positive Cappell-Shaneson polynomial when $a \le 0$. Let $\theta_a$ be a root of $f_a(x)$. Then, $(11, \theta_{11^2l-64} - 2)$ is not invertible and 
    \[
    \begin{aligned}
    \{\, 11, \ \theta_{11^2l-64} - 2, \ \theta_{11^2l-64} ( \theta_{11^2l-64} - 2), \ \theta_{11^2l-64}^2 (\theta_{11^2l-64} - 2)\, \}
    \end{aligned}
    \]
    is a $\mathbb{Z}$-basis of $(11, \theta_{11^2l-64} - 2 )$ for any $l \le 0$ since
    \begin{align*}
        f_{11^2l-64}(x)&\equiv (x-2)^2(x^2+6x+3) \pmod {11} ,\\
        f_{11^2l-64}(x) &= (x-2)(x^3+(121l-62)x^2+2x+(121l-61))+242l-121
    \end{align*}
    and Corollary~\ref{corollary:sourceofinffamily} can be used. The following matrix corresponds to the ideal class of $(11, \theta_{11^2l-64} - 2)$,
      \[
        \begin{bmatrix}
    		2 & 11 & 0 & 0 \\
    		0 & 0 & 1 & 0 \\
    		0 & 0 & 0 & 1\\
            -22l+11 & -121l+61 & -2 & -121l+62\\
    		\end{bmatrix}.
    \]
    This matrix is not $*$-equivalent to the companion matrix of $f_{11^2l-64}(x)$ since the ideal class corresponding to the companion matrix is invertible. Therefore, the above matrix and the companion matrix give us inequivalent knot pairs that cannot be distinguished by the Alexander polynomial.
\end{proof}
\subsection{Cappell-Shaneson knot pairs for \texorpdfstring{$n=5$}{n=5}}
    We give new Cappell-Shaneson knot pairs for $n=5$. We use a positive Cappell-Shaneson polynomial $x^5+ax^4+(-2a-1)x^3+(2a+1)x^2+(-a+1)x-1$ with $a \le 0$, which is newly found in our paper \cite{Endo-Iwaki-Pajitnov:2025}. We can compute the orders of ideal class groups corresponding to $x^5+ax^4+(-2a-1)x^3+(2a+1)x^2+(-a+1)x-1$ in the same way as for $n=4$. The order is the number of Cappell-Shaneson knot pairs with $x^5+ax^4+(-2a-1)x^3+(2a+1)x^2+(-a+1)x-1$ as the first Alexander polynomial.

    \begin{figure}[htbp]
    \centering
    \includegraphics[scale=0.6]{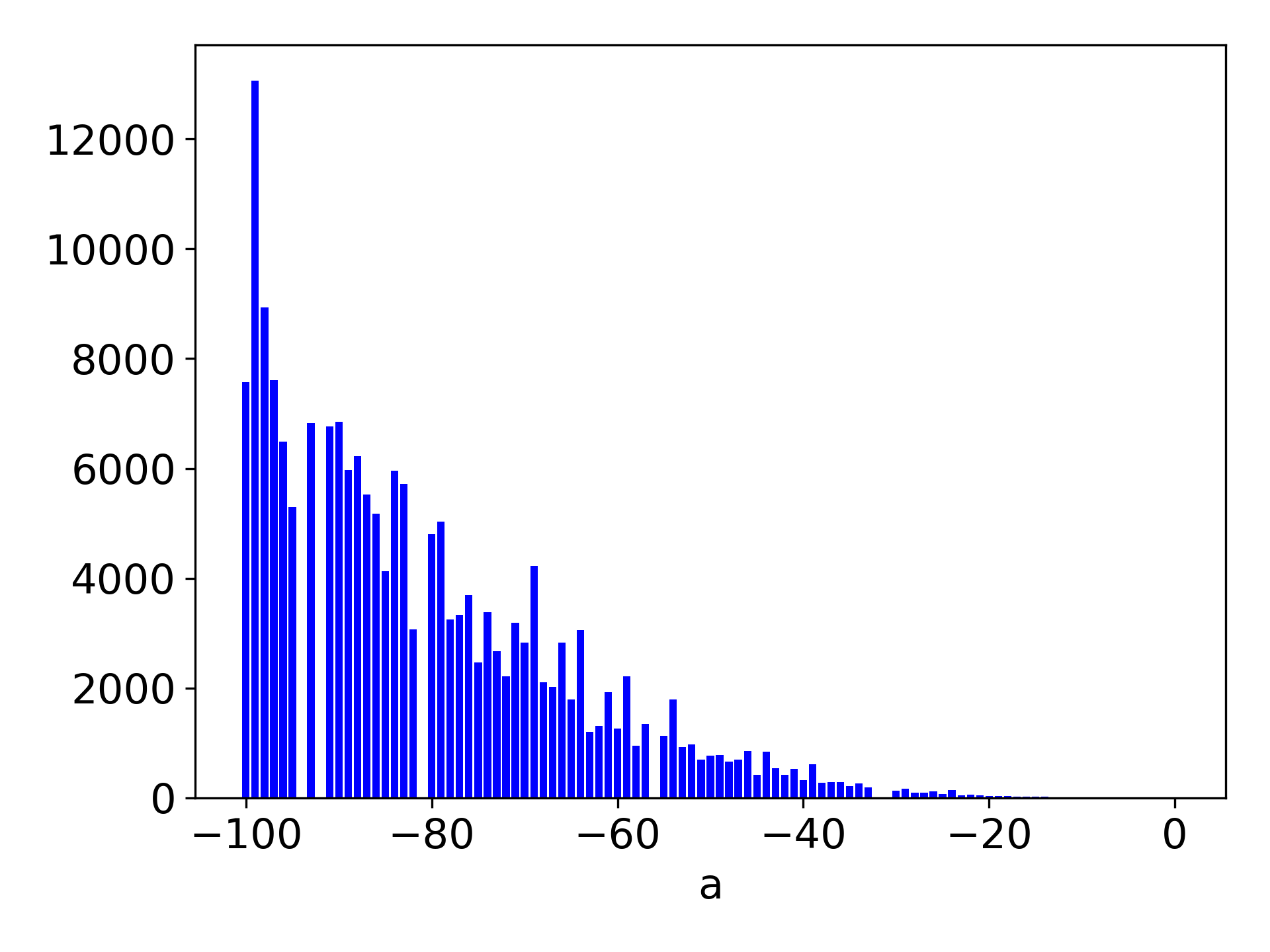}
    \caption{Orders of ideal class groups corresponding to $x^5+ax^4+(-2a-1)x^3+(2a+1)x^2+(-a+1)x-1$.}
    \label{figure:cs5}
    \end{figure}

    We give infinitely many examples of Cappell-Shaneson knot pairs that are not distinguishable by the first Alexander polynomial.

\begin{theorem}
    The following two matrices are positive Cappell-Shaneson matrices for any $l \le 0$. These matrices are not $*$-equivalent and give us inequivalent Cappell-Shaneson knot pairs with the same Alexander polynomial:
    \[
        \begin{bmatrix}
    		0 & 1 & 0 & 0 & 0\\
    		0 & 0 & 1 & 0 & 0\\
    		0 & 0 & 0 & 1 & 0\\
                0 & 0 & 0 & 0 & 1\\
                1 & 25l-7 & -50l+11 & 50l-11 & -25l+6\\
    		\end{bmatrix},
    \]
    \[
        \begin{bmatrix}
    	-2 & 5 & 0 & 0 & 0\\
            0 & 0 & 1 & 0 & 0\\
            0 & 0 & 0 & 1 & 0\\
            0 & 0 & 0 & 0 & 1\\
            -210l+55 & 525l-137 & -250l+65 & 100l-27 & -25l+8\\
    		\end{bmatrix}.
    \]

\end{theorem}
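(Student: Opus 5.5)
The plan is to imitate the proof of the $n=4$ theorem verbatim, now with $p=5$ and $b=-2$. Put $f_a(x)=x^5+ax^4+(-2a-1)x^3+(2a+1)x^2+(-a+1)x-1$ and let $\theta_a$ be a root. By \cite{Endo-Iwaki-Pajitnov:2025}, $f_a$ is a positive Cappell-Shaneson polynomial for every $a\le 0$.

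First I identify the parameter. Reading off the last row of the first matrix, it is the companion matrix of $f_a$ with $a=25l-6$ (for instance, the last entry gives $-a=-25l+6$). Since $l\le 0$ we have $a\le -6\le 0$, so $f_{25l-6}$ is a positive Cappell-Shaneson polynomial. It is irreducible by Theorem~\ref{theorem:irr}.

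Second, for the other matrix $B$ (the one with first row $(-2,5,0,0,0)$), I check that $f_{25l-6}(B)=O$. The cleanest way is to exhibit an eigenvector. Take
$$v=[\,5,\ \theta+2,\ \theta(\theta+2),\ \theta^2(\theta+2),\ \theta^3(\theta+2)\,]^{\mathrm T},\qquad \theta=\theta_{25l-6},$$
and verify $\theta v=Bv$ row by row. The first row says $5\theta=-2\cdot 5+5(\theta+2)$. Rows two to four are tautological. The last row amounts to the division identity
$$f_a(x)=(x+2)q(x)+r,\qquad r=f_a(-2)=42a-23=1050l-275=25(42l-11).$$
This identity rewrites $\theta^4(\theta+2)$ as $-\tfrac{r}{5}\cdot 5$ plus integer combinations of the lower basis elements. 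That produces exactly the bottom row, whose first entry is $-(42l-11)\cdot 5=-210l+55$. I will only sanity-check the remaining entries of that row by expanding $q$.

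Once $\theta v=Bv$ holds with $v\neq 0$, $\theta$ is an eigenvalue of $B$. Since $B$ is an integer matrix, the irreducible polynomial $f_{25l-6}$ divides the characteristic polynomial of $B$. Both are monic of degree $5$, so they are equal, and Cayley-Hamilton then gives $f_{25l-6}(B)=O$. By Theorem~\ref{theorem:irr2} and the remark following it, $B$ is a Cappell-Shaneson matrix. It is positive because positivity depends only on the characteristic polynomial. The same argument applies trivially to the companion matrix. Both matrices therefore have Alexander polynomial $f_{25l-6}$.

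Third, I show that the two matrices are not $*$-equivalent. By the Latimer-MacDuffee-Taussky correspondence in its $*$-version proved above, $B$ corresponds to the ideal class of $S=\langle v\rangle_{\mathbb Z}$. Adapting the proof of Corollary~\ref{corollary:sourceofinffamily}, $S$ is exactly the ideal $(5,\theta+2)$. It remains to see that this ideal is not invertible, using the Kummer-Dedekind criterion, Theorem~\ref{theorem:kummerdedekind}. I need two facts.
\begin{itemize}
\item $-2$ is at least a double root of $\bar f_a$ modulo $5$, i.e. $e\ge 2$. Since $5\mid f_a(-2)$, it suffices to check $f_a'(-2)\equiv 0\pmod 5$ with $a\equiv 4 \pmod 5$, and then to factor $\bar f_a=(x+2)^2\bar g$ explicitly.
\item $25\mid r$, which was computed above.
\end{itemize}
With both facts, $(5,\theta+2)$ is not invertible. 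Note that this is the same configuration as in the $n=4$ proof, where the remainder $121(2l-1)$ was divisible by $p^2$; I use Theorem~\ref{theorem:kummerdedekind} itself rather than the wording of the corollary. The companion matrix corresponds to the trivial class $[\mathbb Z[\theta]]$, which is invertible. Hence the two classes differ, the matrices are not $*$-equivalent, and by the classification theorem of Section~\ref{section:geometry} the corresponding knot pairs are inequivalent.

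The main obstacle is bookkeeping in the last row of $B$: matching every entry with the coefficients of $q(x)$, and confirming the mod-$5$ factorization uniformly in $l$. Both are finite polynomial identities in $l$, which I would verify by direct expansion, or symbolically in SageMath as the paper does elsewhere.
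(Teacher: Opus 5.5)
Your proposal is correct and follows essentially the same route as the paper. You identify $B$ with the ideal $(5,\theta+2)$ through the division identity $f_{25l-6}(x)=(x+2)q(x)+1050l-275$, show this ideal is non-invertible by Kummer--Dedekind (using $\bar f\equiv(x+2)^2(x^3+2x+1)$ and $25\mid 1050l-275$), and compare it with the invertible trivial class of the companion matrix. Your explicit eigenvector verification that $f_{25l-6}(B)=O$, and your use of the criterion in the form $p^2\mid r$ rather than the printed hypothesis $p^2\nmid r_k$ of Corollary~\ref{corollary:sourceofinffamily} (a misprint), make explicit two points the paper leaves implicit.
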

\begin{proof}
    Define $f_a(x)$ as $x^5+ax^4+(-2a-1)x^3+(2a+1)x^2+(-a+1)x-1$, which is a positive Cappell-Shaneson polynomial when $a \le 0$. Let $\theta_a$ be a root of $f_a(x)$. Then, $(5, \theta_{5^2l-6} + 2)$ is not invertible and 
    \[
    \begin{aligned}
    \{\, 5, \ \theta_{5^2l-6} + 2, \ \theta_{5^2l-6} ( \theta_{5^2l-6} + 2), \ \theta_{5^2l-6}^2 (\theta_{5^2l-6} + 2), \ \theta_{5^2l-6}^3 (\theta_{5^2l-6} + 2)\, \}
    \end{aligned}
    \]
    is a $\mathbb{Z}$-basis of $(5, \theta_{5^2l-6} + 2)$ for any $l \le 0$ since
    \begin{align*}
        f_{5^2l-6}(x)\equiv&(x+2)^2(x^3+2x+1) \pmod {5} ,\\
        f_{5^2l-6}(x) =&(x+2)(x^4+(25l-8)x^3+(-100l+27)x^2\\
                    &+(250l-65)x-525l+137)+1050l-275
    \end{align*}
    and Corollary~\ref{corollary:sourceofinffamily} can be used. The following matrix corresponds to the ideal class of $(5, \theta_{5^2l-6} + 2)$,
    \[
        \begin{bmatrix}
    	-2 & 5 & 0 & 0 & 0\\
            0 & 0 & 1 & 0 & 0\\
            0 & 0 & 0 & 1 & 0\\
            0 & 0 & 0 & 0 & 1\\
            -210l+55 & 525l-137 & -250l+65 & 100l-27 & -25l+8\\
    		\end{bmatrix},
    \]
    This matrix is not $*$-equivalent to the companion matrix corresponding to $f_{5^2l-6}(x)$ since the ideal class corresponding to the companion matrix is invertible. Therefore, the above matrix and the companion matrix give us inequivalent knot pairs that cannot be distinguished by the Alexander polynomial.
\end{proof}

\subsection{Cappell-Shaneson knot pairs for \texorpdfstring{$n=6$}{n=6}}
    We give new Cappell-Shaneson knot pairs for $n=6$. We use a positive Cappell-Shaneson polynomial $x^6+(2a+1)x^5+(a^2-a-2)x^4+(-2a^2-2a-2)x^3+(a^2-a-1)x^2+(2a+1)x+1$ with $a \le -1$, which is newly found in our paper \cite{Endo-Iwaki-Pajitnov:2025}. We can compute the orders of ideal class groups corresponding to this polynomial in the same way as for $n=4$. The order is the number of Cappell-Shaneson knot pairs with $x^6+(2a+1)x^5+(a^2-a-2)x^4+(-2a^2-2a-2)x^3+(a^2-a-1)x^2+(2a+1)x+1$ as the first Alexander polynomial.

    \begin{figure}[htbp]
    \centering
    \includegraphics[scale=0.6]{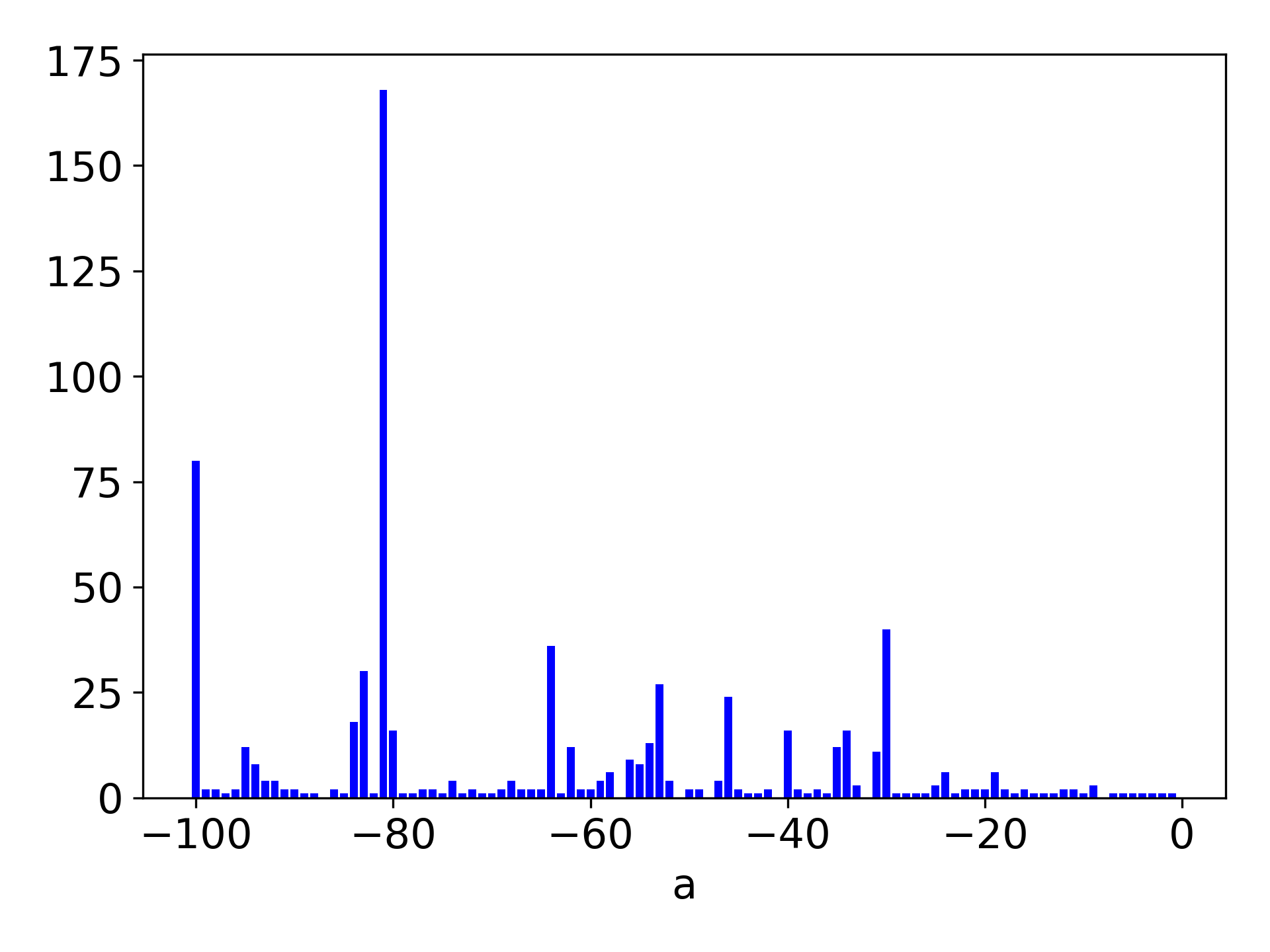}
    \caption{Orders of ideal class groups corresponding to $x^6+(2a+1)x^5+(a^2-a-2)x^4+(-2a^2-2a-2)x^3+(a^2-a-1)x^2+(2a+1)x+1$.}
    \label{figure:cs6}
    \end{figure}

\begin{theorem}
    The following two matrices are positive Cappell-Shaneson matrices for any $l \le 0$. These matrices are not $*$-equivalent and give us inequivalent Cappell-Shaneson knot pairs with the same Alexander polynomial:
    \[
        \begin{bmatrix}
    		0 & 1 & 0 & 0 & 0 & 0\\
    		0 & 0 & 1 & 0 & 0 & 0\\
    		0 & 0 & 0 & 1 & 0 & 0\\
            0 & 0 & 0 & 0 & 1 & 0\\
            0 & 0 & 0 & 0 & 0 & 1\\
            -1 & -2A_l-1 & -A_l^2+A_l+1 & 2A_l^2+2A_l+2 & -A_l^2+A_l+2 & -2A_l-1\\
    		\end{bmatrix},
    \]
    \[
        \begin{bmatrix}
    	-2 & 7 & 0 & 0 & 0 & 0\\
            0 & 0 & 1 & 0 & 0 & 0\\
            0 & 0 & 0 & 1 & 0 & 0\\
            0 & 0 & 0 & 0 & 1 & 0\\
            0 & 0 & 0 & 0 & 0 & 1\\
            B_l & C_l & D_l & E_l & F_l & -2A_l+1\\
    		\end{bmatrix},
    \] with
    \[
    A_l = 49l-8,
    \]
    \[
    B_l = -12348l^2+336l-413,
    \]
    \[
    C_l = 43218l^2-1176l+1445,
    \]
    \[
    D_l = -21609l^2+539l-715,
    \]
    \[
    E_l = 9604l^2-228l+322,
    \]
    \[
    F_l = -2401l^2+279l-104.
    \]
\end{theorem}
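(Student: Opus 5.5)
Write $f_a(x)=x^6+(2a+1)x^5+(a^2-a-2)x^4+(-2a^2-2a-2)x^3+(a^2-a-1)x^2+(2a+1)x+1$. I will follow the template of the $n=4$ and $n=5$ cases, specialised to $a=A_l=7^2l-8$; the ideal used will be $(7,\theta_{A_l}+2)$. Since $f_a$ is a positive Cappell-Shaneson polynomial for $a\le -1$ (\cite{Endo-Iwaki-Pajitnov:2025}) and $A_l\le -8$ for $l\le 0$, Theorem~\ref{theorem:irr} shows $f_{A_l}$ is irreducible. By Theorem~\ref{theorem:irr2}, any integer matrix $M$ with $f_{A_l}(M)=O$ then has characteristic polynomial $f_{A_l}$. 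It is therefore a Cappell-Shaneson matrix, and positive, because $\mathrm{CS}_k$ and positivity depend only on the characteristic polynomial.

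\textbf{Step 1 (the companion matrix).} The last row of the first matrix is minus the non-leading coefficients of $f_{A_l}$. Hence it is the companion matrix, and the first claim is immediate for it.

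\textbf{Step 2 (factorisation mod $7$).} Reducing modulo $7$, $A_l\equiv -8\equiv -1$, so $f_{A_l}\equiv f_{-1}=x^6-x^5+0x^4-2x^3+x^2-x+1 \pmod 7$. I will check that $x=-2$ is a double root of this polynomial modulo $7$. Then I will divide $f_{A_l}$ by $x+2$ in $\mathbb{Z}[x]$ and compute the remainder $r=f_{A_l}(-2)$, a polynomial in $l$. Expanding, $f_a(-2)$ is quadratic in $a$; substituting $a=49l-8$, I expect $r$ to be divisible by $7$ but not by $49$. This is a congruence check on $f_{-8}(-2)$ together with the linear term in $l$.

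Corollary~\ref{corollary:sourceofinffamily} then applies with $p=7$ and $b=-2$. It gives that $(7,\theta+2)$ is not invertible, and that
\[
\{7,\ \theta+2,\ \theta(\theta+2),\ \dots,\ \theta^4(\theta+2)\}
\]
is a $\mathbb{Z}$-basis of $(7,\theta+2)$.

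\textbf{Step 3 (the second matrix).} Write $v$ for this basis vector. Following Theorem~\ref{theorem:LMT}, I will compute the matrix of multiplication by $\theta$ in this basis, i.e. solve $\theta v=Mv$:
\begin{itemize}
\item $\theta\cdot 7 = 7(\theta+2)-2\cdot 7$, which gives the first row $(-2,7,0,\dots)$.
\item The middle rows are shifts.
\item $\theta\cdot\theta^4(\theta+2)=\theta^5(\theta+2)$ must be reduced using $f_{A_l}(\theta)=0$. Writing $f=(x+2)q(x)+r$, we get $\theta^5(\theta+2)=-(q(\theta)-\theta^5)(\theta+2)-r$. Then each $\theta^j(\theta+2)$ with $j\le 4$ is rewritten via the basis, and $r=7\cdot(r/7)$.
\end{itemize}
This produces the last row $(B_l,C_l,D_l,E_l,F_l,-2A_l+1)$, and its first entry should come out as $-r/7$ adjusted appropriately. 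The hardest step is this bookkeeping with polynomials in $l$, quadratic because $A_l^2$ enters; I would do it symbolically, or check it by verifying $f_{A_l}(M)=O$ directly. Either way $M$ lies in the set described by Theorem~\ref{theorem:irr2}, so by Step 0 it is a positive Cappell-Shaneson matrix.

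\textbf{Step 4 (inequivalence).} By the $*$-equivalence version of Theorem~\ref{theorem:LMT}, the companion matrix corresponds to the class of $\mathbb{Z}[\theta]$, which is invertible. The second matrix corresponds to $[(7,\theta+2)]$, which is non-invertible, so the two matrices are not $*$-equivalent. By the classification theorem of Section~\ref{section:geometry}, the pairs $\PPPP$ are inequivalent, and both have Alexander polynomial $f_{A_l}$.
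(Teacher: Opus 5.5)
Your outline follows the paper's own argument: Corollary \ref{corollary:sourceofinffamily} applied to $(7,\theta+2)$, the Latimer--MacDuffee--Taussky basis computation, and the companion matrix corresponding to the invertible class $[\mathbb{Z}[\theta]]$. However, Step 2 has the decisive divisibility backwards. Theorem \ref{theorem:kummerdedekind} says $(p,\theta-b)$ is invertible if and only if $e_k=1$ or $p^2\nmid r_k$. So non-invertibility requires $e_k\geq 2$ \emph{and} $p^2\mid r_k$. The ``$p^2\nmid r_k$'' in the hypothesis of Corollary \ref{corollary:sourceofinffamily} is a misprint: its basis assertion only uses $p\mid r_k$, and in every example in the paper $p^2\mid r_k$. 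If $r$ were divisible by $7$ but not by $49$, as you expect, then $(7,\theta+2)$ would be invertible and Step 4 would collapse. Your expectation is also simply false. We have $r=f_a(-2)=36a^2-72a+11=(6a-1)(6a-11)$, and for $a=A_l=49l-8$ one gets $6a-1=49(6l-1)$, so $r=49(6l-1)(294l-59)$. This, together with $e=2$, is exactly what gives non-invertibility: modulo $7$ we have $f_{A_l}\equiv(x+1)(x+2)^2(x^3+x^2+x+2)$, and the cofactor does not vanish at $-2$.

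Separately, when you carry out the Step 3 bookkeeping you will not recover the printed $B_l,\dots,F_l$. Synthetic division gives $f_a=(x+2)q+r$ with $q=x^5+(2a-1)x^4+(a^2-5a)x^3+(-4a^2+8a-2)x^2+(9a^2-17a+3)x-18a^2+36a-5$. The last row is then $(-r/7,-q_0,-q_1,-q_2,-q_3,-q_4)$. At $a=49l-8$ its first five entries are $-12348l^2+4536l-413$, $43218l^2-15876l+1445$, $-21609l^2+7889l-715$, $9604l^2-3528l+322$ and $-2401l^2+1029l-104$. These agree with the stated $B_l,\dots,F_l$ only at $l=0$; the paper's displayed quotient and remainder carry the same slip in the terms linear in $l$. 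For example, at $l=-1$ the printed second matrix $M$ has $\det(I-M)=299$. So it violates $\mathrm{CS}_1$, its characteristic polynomial is not $f_{-57}$, and your proposed check $f_{A_l}(M)=O$ would fail. With Step 2 corrected, your argument proves the theorem for the corrected last row, and for the statement exactly as printed only when $l=0$.
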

\begin{proof}
    Define $f_a(x)$ as $x^6+(2a+1)x^5+(a^2-a-2)x^4+(-2a^2-2a-2)x^3+(a^2-a-1)x^2+(2a+1)x+1$, which is a positive Cappell-Shaneson polynomial when $a \le -1$. Let $\theta_a$ be a root of $f_a(x)$. Then, $(7, \theta_{7^2l-8} + 2)$ is not invertible and 
    \[
    \begin{aligned}
    \{\, 7, \ &\theta_{7^2l-8} + 2, \ \theta_{7^2l-8} ( \theta_{7^2l-8} + 2), \\ &\theta_{7^2l-8}^2 (\theta_{7^2l-8} + 2), \ \theta_{7^2l-8}^3 (\theta_{7^2l-8} + 2),\ \theta_{7^2l-8}^4 (\theta_{7^2l-8} + 2)\, \}
    \end{aligned}
    \]
    is a $\mathbb{Z}$-basis of $(7, \theta_{7^2l-8} + 2)$ for any $l \le 0$ since
    \begin{align*}
        f_{7^2l-8}(x)\equiv&(x+1)(x+2)^2(x^3+x^2+x+2) \pmod {7} ,\\
        f_{7^2l-8}(x) =&(x+2)(x^5+(98l-17)x^4+(2401l^2-279l+104)x^3\\
                        &+(-9604l^2+228l-322)x^2+(21609l^2-539l+715)x\\
                        &-43218l^2+1176l-1445)+86436l^2-2352l+2891
    \end{align*}
    and Corollary~\ref{corollary:sourceofinffamily} can be used. The following matrix corresponds to the ideal class of $(7, \theta_{7^2l-8} + 2)$,
    \[
        \begin{bmatrix}
    	    -2 & 7 & 0 & 0 & 0 & 0\\
            0 & 0 & 1 & 0 & 0 & 0\\
            0 & 0 & 0 & 1 & 0 & 0\\
            0 & 0 & 0 & 0 & 1 & 0\\
            0 & 0 & 0 & 0 & 0 & 1\\
            B_l & C_l & D_l & E_l & F_l & -2A_l+1\\
    		\end{bmatrix}.
    \]
    This matrix is not $*$-equivalent to the companion matrix corresponding to $f_{7^2l-8}(x)$ since the ideal class corresponding to the companion matrix is invertible. Therefore, the above matrix and the companion matrix give us inequivalent knot pairs that cannot be distinguished by the Alexander polynomial.
\end{proof}

\subsection{Cappell-Shaneson knot pairs for \texorpdfstring{$n=7$}{n=7}}
    We give new Cappell-Shaneson knot pairs for $n=7$. We use a positive Cappell-Shaneson polynomial $x^7-ax^6+(a-2)x^5+(-a^2-1)x^4+(a^2+2)x^3+(-a+2)x^2+ax-1$ with $a \ge 0$, which is newly found in our paper \cite{Endo-Iwaki-Pajitnov:2025}. We can compute the orders of ideal class groups corresponding to this polynomial in the same way as for $n=4$. The order is the number of Cappell-Shaneson knot pairs with $x^7-ax^6+(a-2)x^5+(-a^2-1)x^4+(a^2+2)x^3+(-a+2)x^2+ax-1$ as the first Alexander polynomial.

\begin{theorem}
    The following two matrices are positive Cappell-Shaneson matrices for any $l \ge 0$. These matrices are not $*$-equivalent and give us inequivalent Cappell-Shaneson knot pairs with the same Alexander polynomial:
    \[
        \begin{bmatrix}
    		0 & 1 & 0 & 0 & 0 & 0 & 0\\
    		0 & 0 & 1 & 0 & 0 & 0 & 0\\
    		0 & 0 & 0 & 1 & 0 & 0 & 0\\
                0 & 0 & 0 & 0 & 1 & 0 & 0\\
                0 & 0 & 0 & 0 & 0 & 1 & 0\\
                0 & 0 & 0 & 0 & 0 & 0 & 1\\
                1 & -G_l & G_l-2 & -G_l^2-2 & G_l^2+1 & -G_l+2 & G_l\\
    		\end{bmatrix},
    \]
    \[
        \begin{bmatrix}
    	-6 & 11 & 0 & 0 & 0 & 0 & 0\\
            0 & 0 & 1 & 0 & 0 & 0 & 0\\
            0 & 0 & 0 & 1 & 0 & 0 & 0\\
            0 & 0 & 0 & 0 & 1 & 0 & 0\\
            0 & 0 & 0 & 0 & 0 & 1 & 0\\
            0 & 0 & 0 & 0 & 0 & 0 & 1\\
            H_l & I_l & J_l & K_l & L_l & M_l & G_l+6\\
    		\end{bmatrix},
    \] with
    \[
    G_l = 121l+20,
    \]
    \[
    H_l = 2012472l^2+1264494l+178211,
    \]
    \[
    I_l = -3689532l^2-2318239l-326720,
    \]
    \[
    J_l = 614922l^2+386353l+54450,
    \]
    \[
    K_l = -102487l^2-64372l-9072,
    \]
    \[
    L_l = 14641l^2+9922l+1445,
    \]
    \[
    M_l = -847l-174.
    \]
\end{theorem}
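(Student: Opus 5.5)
Following the pattern of the previous three theorems, I will set $f_a(x)=x^7-ax^6+(a-2)x^5+(-a^2-1)x^4+(a^2+2)x^3+(-a+2)x^2+ax-1$, a positive Cappell-Shaneson polynomial for $a\ge 0$ by \cite{Endo-Iwaki-Pajitnov:2025}. I will specialize it to $a=G_l=11^2l+20$ with $l\ge 0$, and let $\theta$ denote a root of $f_{G_l}$. The first matrix is visibly the companion matrix of $f_{G_l}$: its last row is $(1,-G_l,G_l-2,-G_l^2-2,G_l^2+1,-G_l+2,G_l)$, the negatives of the coefficients. By Theorem~\ref{theorem:irr} and Theorem~\ref{theorem:irr2} it is a positive Cappell-Shaneson matrix. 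For the second matrix it then suffices to show that its characteristic polynomial is also $f_{G_l}$, for which I will use the ideal-theoretic description rather than a direct determinant computation.

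The key step is to apply Corollary~\ref{corollary:sourceofinffamily} with $p=11$ and $b=6$, so $g_k=x-6$. I must check two facts. First, modulo $11$ we have $G_l\equiv 20\equiv 9$, so $\bar f=x^7-9x^6+7x^5-82x^4+83x^3-7x^2+9x-1$ over $\mathbb{F}_{11}$; I expect it to factor as $(x-6)^2h(x)$ with $h(6)\ne 0$. I would verify this by evaluating $\bar f(6)$ and $\bar f'(6)$ mod $11$. Second, I need the remainder $r=f_{G_l}(6)$, a quadratic in $l$, and must check that $11\mid r$ but $11^2\nmid r$ for all $l$. Since $G_l\equiv 20 \pmod{121}$, we get $r\equiv f_{20}(6)\pmod{121}$, so a single numerical check suffices: $11\,\|\, f_{20}(6)$. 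I will also record the quotient $q(x)$ in $f_{G_l}(x)=(x-6)q(x)+r$. I anticipate that its coefficients are exactly, up to sign, the entries $G_l+6$, $-L_l$, $-K_l$, $\dots$ of the last row of the second matrix, as in the $n=5,6$ cases.

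With this done, the corollary gives the $\mathbb{Z}$-basis $v=(11,\theta-6,\theta(\theta-6),\dots,\theta^5(\theta-6))$ of the non-invertible ideal $(11,\theta-6)$. Next I compute $\theta v$ in this basis:
\begin{itemize}
\item $\theta\cdot 11=11(\theta-6)+66=6\cdot 11+11(\theta-6)$, which gives the first row $(-6,11,0,\dots)$ up to the sign convention $b=6$ versus $-6$;
\item $\theta\cdot\theta^j(\theta-6)$ is the next basis vector for $j\le 4$, which gives the shift rows;
\item $\theta^6(\theta-6)$ is rewritten via $f(\theta)=0$, which gives the last row $(H_l,\dots,M_l,G_l+6)$.
\end{itemize}
If the sign in the first row indicates $b=-6$ rather than $6$, I will redo the check with $\theta+6$; this bookkeeping is the main place errors can creep in, and I will match it against the stated entries. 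By Theorem~\ref{theorem:LMT}, the resulting integer matrix $A'$ satisfies $\theta v=A'v$, hence $f_{G_l}(A')=O$. By Theorem~\ref{theorem:irr2}, $A'$ has characteristic polynomial $f_{G_l}$, so it is a positive Cappell-Shaneson matrix, and both knot pairs have the same Alexander polynomial.

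Finally, the companion matrix corresponds to the trivial class $[\mathbb{Z}[\theta]]$, which is invertible, while $A'$ corresponds to the non-invertible class $[(11,\theta\mp 6)]$. By the $*$-equivalence bijection theorem ($\overline{\phi_f}$), the two matrices are not $*$-equivalent. The classification theorem for Cappell-Shaneson knot pairs from Section~\ref{section:geometry} then gives inequivalent knot pairs. The main obstacle is the explicit polynomial identity: the $l^2$-coefficients of $H_l,\dots,L_l$ must match the quotient $q(x)$ and the remainder exactly. I would verify this symbolically, in the same way as the displayed division identities in the $n=5,6$ proofs.
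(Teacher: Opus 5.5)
\textbf{There is a genuine gap: the decisive divisibility check is inverted.} Your outline otherwise matches the paper's: the companion matrix, Corollary~\ref{corollary:sourceofinffamily} at $p=11$, the basis $11,\theta+6,\dots,\theta^5(\theta+6)$, reading off $A'$ from $\theta v=A'v$, and separating the classes by invertibility via $\overline{\phi_f}$.

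You propose to verify $11\mid r$ and $11^2\nmid r$. By Theorem~\ref{theorem:kummerdedekind}, $(11,\theta-b)$ is invertible if and only if $e_k=1$ or $11^2\nmid r$. So if your check succeeded, it would prove the ideal \emph{invertible}, and nothing would distinguish $A'$ from the companion matrix. Non-invertibility requires $e_k\ge 2$ and $11^2\mid r$.

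The hypothesis ``$p^2\nmid r_k$'' printed in Corollary~\ref{corollary:sourceofinffamily} contradicts the theorem it is derived from; its basis assertion only uses $p\mid r_k$. The correct condition $11^2\mid r$ is what actually holds. Indeed $f_{20}(-6)=-1960321=-11^2\cdot 16201$, and in general the paper's remainder is $r=-22137192l^2-13909434l-1960321=-121\,(182952l^2+114954l+16201)$. This is consistent with the matrix entry $H_l=-r/11$ being divisible by $11$.

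So your planned test $11\,\|\,f_{20}(\pm 6)$ would simply fail. Once the inequality is reversed, your reduction $r\equiv f_{20}(-6)\pmod{121}$ is a correct and economical way to get $121\mid r$ for every $l\ge 0$.

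\textbf{Smaller point: the choice $b=6$ is not a sign convention.} Over $\mathbb{F}_{11}$ one has $\bar f=x^7+2x^6+7x^5+6x^4+6x^3+4x^2+9x+10$, and $\bar f(6)=7\neq 0$. Hence $(11,\theta-6)$ is the unit ideal, and your expected factorization $(x-6)^2h(x)$ is wrong. The double root is $-6$: $\bar f=(x+6)^2(x^5+x^4+3x^3+8x+7)$. Since the corollary's basis always produces first row $(b,p,0,\dots,0)$, the entry $-6$ forces $b=-6$. Your fallback would catch this.
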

\begin{proof}
    Define $f_a(x)$ as $x^7-ax^6+(a-2)x^5+(-a^2-1)x^4+(a^2+2)x^3+(-a+2)x^2+ax-1$, which is a positive Cappell-Shaneson polynomial when $a \ge 0$. Let $\theta_a$ be a root of $f_a(x)$. Then, $(11, \theta_{11^2l+20} + 6)$ is not invertible and 
    \[
    \begin{aligned}
    \{\,11,\ &\theta_{11^2l+20} + 6, \ \theta_{11^2l+20} ( \theta_{11^2l+20} + 6), \ \theta_{11^2l+20}^2 (\theta_{11^2l+20} + 6), \\ &\theta_{11^2l+20}^3 (\theta_{11^2l+20} + 6), \ \theta_{11^2l+20}^4 (\theta_{11^2l+20} + 6), \ \theta_{11^2l+20}^5 (\theta_{11^2l+20} + 6)\, \}
    \end{aligned}
    \]
    is a $\mathbb{Z}$-basis of $(11, \theta_{11^2l+20} + 6)$ for any $l \ge 0$ since
    \begin{align*}
        f_{11^2l+20}(x)\equiv &(x+6)^2(x^5+x^4+3x^3+8x+7) \pmod {11} ,\\
        f_{11^2l+20}(x) = &(x+6)(x^6+(-121l-26)x^5+(847l+174)x^4\\
                        &+(-14641l^2-9922l-1445)x^3+(102487l^2+64372l+9072)x^2\\
                        &+(-614922l^2-386353l-54450)x\\
                        &+3689532l^2+2318239l+326720)\\
                        &-22137192l^2-13909434l-1960321
    \end{align*}
    and Corollary~\ref{corollary:sourceofinffamily} can be used. The following matrix corresponds to the ideal class of $(11, \theta_{11^2l+20} + 6)$,
    \[
        \begin{bmatrix}
    	-6 & 11 & 0 & 0 & 0 & 0 & 0\\
            0 & 0 & 1 & 0 & 0 & 0 & 0\\
            0 & 0 & 0 & 1 & 0 & 0 & 0\\
            0 & 0 & 0 & 0 & 1 & 0 & 0\\
            0 & 0 & 0 & 0 & 0 & 1 & 0\\
            0 & 0 & 0 & 0 & 0 & 0 & 1\\
            H_l & I_l & J_l & K_l & L_l & M_l & G_l+6\\
    		\end{bmatrix}.
    \]
    This matrix is not $*$-equivalent to the companion matrix of $f_{11^2l+20}(x)$ since the ideal class corresponding to the companion matrix is invertible. Therefore, the above matrix and the companion matrix give us inequivalent knot pairs that cannot be distinguished by the Alexander polynomial.
\end{proof}

\bibliographystyle{plain}
\bibliography{main}
\end{document}